\def\thm@space@setup{
  \thm@preskip=10pt \thm@postskip=10pt
}
\theoremstyle{plain}
\newtheorem{theorem}{Theorem}
\theoremstyle{plain}
\newtheorem{lemma}[theorem]{Lemma}
\theoremstyle{definition}
\theoremstyle{definition}
\newtheorem{example}{Example}
\theoremstyle{remark}
\newtheorem*{remark}{Remark}
\theoremstyle{remark}
\definecolor{dkgreen}{rgb}{0,0.6,0}
\definecolor{gray}{rgb}{0.5,0.5,0.5}
\definecolor{mauve}{rgb}{0.58,0,0.82}
\newcommand{\abs}[1]{\left\lvert#1\right\lvert}
\newcommand{\norm}[1]{\left\lVert#1\right\rVert}
\DeclareMathOperator*{\argmin}{arg\,min}
\newcommand{\email}[1]{\protect\href{mailto:#1}{#1}}
\colorlet{inlinkcolor}{green!50!black}
\colorlet{exlinkcolor}{red!50!black}
\newenvironment{@abssec}[1]{
        \vspace{.05in}\parindent .0in
        {\upshape\bfseries #1. }\ignorespaces
    }
    {\par\vspace{.1in}}
\renewenvironment{abstract}{\begin{@abssec}{\abstractname}}{\end{@abssec}}
\newenvironment{keywords}{\begin{@abssec}{Keywords}}{\end{@abssec}}
\author{
  {\normalsize Qinmeng Zou}\thanks{CentraleSup\'elec, Universit\'e Paris-Saclay, 3 rue Joliot Curie, 91190 Gif-sur-Yvette, France
    (\email{zouqinmeng@gmail.com}, \email{frederic.magoules@hotmail.com}).}
  \and
  {\normalsize Fr\'ed\'eric Magoul\`es\footnotemark[1]}
}
\title{Parameter Estimation in the Hermitian and Skew-Hermitian Splitting Method Using Gradient Iterations}
\date{}
\begin{document}
\maketitle
\thispagestyle{fancy}

\begin{abstract}
This paper presents enhancement strategies for the Hermitian and skew-Hermitian splitting method based on gradient iterations.
The spectral properties are exploited for the parameter estimation, often resulting in a better convergence.
In particular, steepest descent with early stopping can generate a rough estimate of the optimal parameter.
This is better than an arbitrary choice since the latter often causes stability problems or slow convergence.
Additionally, lagged gradient methods are considered as inner solvers for the splitting method.
Experiments show that they are competitive with conjugate gradient in low precision.
\end{abstract}

\begin{keywords}
Hermitian and skew-Hermitian splitting; steepest descent; minimal gradient; parameter estimation; lagged gradient methods; Barzilai-Borwein method.
\end{keywords}

\section{Introduction}
\label{sec:1}

We are interested in solving the linear system
\begin{equation}
\label{eq:ls}
Ax=b,
\end{equation}
where $A$ is a non-Hermitian positive definite matrix of size $N$.
It has been observed that splitting methods can be used with success.
The traditional alternating direction implicit method~\cite{Peaceman1955} has inspired the construction of alternate two-step splittings $A=\mathcal{M}_1-\mathcal{N}_1$ and $A=\mathcal{M}_2-\mathcal{N}_2$, and this leads to an iteration called Hermitian and skew-Hermitian splitting (HSS)~\cite{Bai2003} in which alternately a shifted Hermitian system and a shifted skew-Hermitian system are solved.
HSS has received so much attention~\cite{Bai2007b,Bai2008,Benzi2009,Salkuyeh2015,Wu2015,Wu2017,Pourbagher2018}, possibly due to its guaranteed convergence and mathematical beauty.

Let $H$ and $S$ denote the Hermitian and skew-Hermitian parts of $A$, respectively.
Let $A^\mathsf{H}$ be the conjugate transpose of matrix $A$.
It follows that
\[
H = \frac{A+A^\mathsf{H}}{2},\quad S = \frac{A-A^\mathsf{H}}{2}.
\]
Let $I$ be the identity matrix.
In short, the HSS method is defined as follows
\begin{equation}
\label{eq:hss}
\left\{\begin{array}{l}
(\gamma I + H)x_{n+\frac{1}{2}} = (\gamma I - S)x_n + b, \\[1ex]
(\gamma I + S)x_{n+1} = (\gamma I - H)x_{n+\frac{1}{2}} + b,
\end{array}\right.
\end{equation}
with $\gamma>0$.
It could be regarded as a stationary iterative process
\[
x_{n+1} = Tx_n + p,
\]
where $x_0$ is a given vector.
Following the notations of Bai et al.~\cite{Bai2003} let us set
\[
\mathcal{M}_1 = \gamma I + H,\quad \mathcal{N}_1 = \gamma I - S,\quad \mathcal{M}_2 = \gamma I + S,\quad \mathcal{N}_2 = \gamma I - H.
\]
The operators $T$ and $p$ can be expressed as
\[
T = \mathcal{M}_2^{-1}\mathcal{N}_2\mathcal{M}_1^{-1}\mathcal{N}_1,\quad p = \mathcal{M}_2^{-1}(I+\mathcal{N}_2\mathcal{M}_1^{-1})b.
\]
Let $\sigma(\cdot)$ be the spectrum of a matrix and let $\rho(\cdot)$ be the spectral radius.
Convergence result for~\eqref{eq:hss} in the non-Hermitian positive definite case was established by Bai et al.~\cite{Bai2003}
\begin{equation}
\label{eq:bd}
\rho(T) \le \norm{\mathcal{N}_2\mathcal{M}_1^{-1}} = \max_{\lambda\in\sigma(H)} \frac{\abs{\lambda-\gamma}}{\abs{\lambda+\gamma}},
\end{equation}
where $\norm{\cdot}$ denotes $2$-norm.
This shows that the spectral radius of iteration matrix $T$ is less than $1$.
As a result, HSS has guaranteed convergence for which the speed depends only on the Hermitian part $H$.
Let $\lambda_i(\cdot)$ be the $i$th eigenvalue of a matrix in ascending order.
The key observation here is that choosing
\begin{equation}
\label{eq:gamma}
\gamma = \gamma_* = \sqrt{\lambda_1(H)\lambda_N(H)}
\end{equation}
leads to the well-known upper bound
\begin{equation}
\label{eq:bd:op}
\rho(T) \le \frac{\sqrt{\kappa(H)}-1}{\sqrt{\kappa(H)}+1},
\end{equation}
where $\kappa(\cdot)$ denotes the condition number.
It is noteworthy that inequality~\eqref{eq:bd:op} is similar to the convergence result for conjugate gradient (CG)~\cite{Hestenes1952,vanderSluis1986} in terms of $A$-norm error.
As mentioned by Bai et al.~\cite{Bai2003}, $\gamma_*$ minimizes the upper bound of $\rho(T)$ but not $\rho(T)$ itself.
In some cases the right-hand side of~\eqref{eq:bd} may not be an accurate approximation to the spectral radius.
Since very little theory is available on direct minimization, we still try to approximate indirectly the optimal parameter~$\gamma_*$.

In this paper we exploit spectral properties of gradient iterations in order to make the estimation feasible.
In Section~\ref{sec:2}, we focus on the asymptotic analysis of the steepest descent method.
In Section~\ref{sec:3}, we discuss some strategies for estimating the parameter in HSS based on gradient iterations and give a comparison of lagged gradient methods and CG for solving Hermitian positive definite systems in low precision.
Numerical results are shown in Section~\ref{sec:4} and some concluding remarks are drawn in Section~\ref{sec:5}.

\section{Asymptotic analysis of steepest descent}
\label{sec:2}

In this section we consider the Hermitian positive definite (HPD) linear system
\begin{equation}
\label{eq:ls:hpd}
Hx=\hat{b}
\end{equation}
of size $N$.
The solution $x_*$ is the unique global minimizer of convex quadratic function
\begin{equation}
\label{eq:quad}
f(x) = \frac{1}{2}x^\mathsf{H} Hx - \hat{b}^\mathsf{H} x.
\end{equation}
For $n = 0,\,1,\,\dots$, the gradient method is of the form
\begin{equation}
\label{eq:x}
x_{n+1} = x_n - \alpha_n g_n,
\end{equation}
where $g_n = \nabla f(x_n) = Hx_n-\hat{b}$.
This gives the updating formula
\begin{equation}
\label{eq:g}
g_{n+1} = g_n - \alpha_n Hg_n.
\end{equation}
The steepest descent (SD) method proposed by Cauchy~\cite{Cauchy1847} defines a sequence of steplengths as follows
\begin{equation}
\label{eq:sd}
\alpha_n^\text{SD} = \frac{g_n^\mathsf{H} g_n}{g_n^\mathsf{H} Hg_n},
\end{equation}
which is the reciprocal of Rayleigh quotient.
It minimizes the quadratic function $f$ or the $A$-norm error of the system~\eqref{eq:ls:hpd} and gives theoretically an optimal result at each step
\[
\alpha_n^\text{SD} = \argmin_{\alpha}f(x_n-\alpha g_n) = \argmin_{\alpha}\norm{(I-\alpha H)e_n}_H^2,
\]
where $e_n=x_*-x_n$.
This classical method is known to behave badly in practice.
The directions tend to asymptotically alternate between two orthogonal directions resulting in a slow convergence~\cite{Akaike1959}.

The motivation for this paper arose during the development of efficient gradient methods.
We notice that generally SD converges much slower than CG for HPD systems.
However, the spectral properties of the former could be beneficial to parameter estimation.
Akaike~\cite{Akaike1959} provided a probability distribution model for the asymptotic analysis of SD.
It appears that standard techniques used in linear algebra are not very helpful in this case.
The so-called two-step invariance property led to the work of Nocedal et al.~\cite{Nocedal2002} in which further asymptotic results are presented.
Let $v_i(\cdot)$ be the eigenvector corresponding to the eigenvalue~$\lambda_i(\cdot)$.
Relevant properties by Nocedal et al.~\cite{Nocedal2002} which will be exploited in the following text can be briefly described in Lemma~\ref{thm:1}.
Note that a symmetric positive definite real matrix was used by Nocedal et al.~\cite{Nocedal2002}.
Therefore, we extend this result and we present a new lemma and its proof in the case of Hermitian positive definite systems.
\begin{lemma}
\label{thm:1}
Assume that $\lambda_1(H) < \dots < \lambda_N(H)$.
Assume that $v_1^\mathsf{H}(H) g_0 \ne 0$ and $v_N^\mathsf{H}(H) g_0 \ne 0$.
Consider the gradient method~\eqref{eq:x} with steplength~\eqref{eq:sd} being used to solve~\eqref{eq:ls:hpd} where $H$ est Hermitian positive definite.
Then
\begin{equation}
\label{eq:1.1}
\lim_{n\rightarrow\infty}\alpha_{2n}^\text{SD} = \frac{1+c^2}{\lambda_1(H)(1+c^2\kappa(H))},
\end{equation}
\begin{equation}
\label{eq:1.2}
\lim_{n\rightarrow\infty}\alpha_{2n+1}^\text{SD} = \frac{1+c^2}{\lambda_1(H)(c^2+\kappa(H))},
\end{equation}
and
\begin{equation}
\label{eq:1.3}
\lim_{n\rightarrow\infty}\frac{\norm{g_{2n+1}}^2}{\norm{g_{2n}}^2} = \frac{c^2(\kappa(H)-1)^2}{(1+c^2\kappa(H))^2},
\end{equation}
\begin{equation}
\label{eq:1.4}
\lim_{n\rightarrow\infty}\frac{\norm{g_{2n+2}}^2}{\norm{g_{2n+1}}^2} = \frac{c^2(\kappa(H)-1)^2}{(c^2+\kappa(H))^2},
\end{equation}
for some constant $c$.
\end{lemma}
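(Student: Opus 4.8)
The plan is to pass to eigencoordinates and reduce the Hermitian problem to the real dynamics treated by Nocedal et al.~\cite{Nocedal2002}. Since $H$ is Hermitian positive definite, it admits an orthonormal eigenbasis $v_1(H),\dots,v_N(H)$ with real eigenvalues $0<\lambda_1(H)<\dots<\lambda_N(H)$. First I would expand the gradient as $g_n=\sum_{i=1}^N\mu_i^{(n)}v_i(H)$ with complex coefficients $\mu_i^{(n)}=v_i^\mathsf{H}(H)g_n$. The recursion~\eqref{eq:g} then diagonalizes into $\mu_i^{(n+1)}=(1-\alpha_n\lambda_i(H))\mu_i^{(n)}$, while the steplength~\eqref{eq:sd} reads $\alpha_n^\text{SD}=\sum_i\abs{\mu_i^{(n)}}^2\big/\sum_i\lambda_i(H)\abs{\mu_i^{(n)}}^2$.

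The key observation is that each factor $1-\alpha_n\lambda_i(H)$ is real, so the squared moduli $\xi_i^{(n)}=\abs{\mu_i^{(n)}}^2$ evolve by $\xi_i^{(n+1)}=(1-\alpha_n\lambda_i(H))^2\xi_i^{(n)}$, and both $\alpha_n^\text{SD}$ and $\norm{g_n}^2=\sum_i\xi_i^{(n)}$ depend only on the $\xi_i^{(n)}$. This is exactly the recursion governing the squared real coefficients in the symmetric case, the complex phases being passive (they are at most flipped by $\pi$ when a factor is negative). Hence I would invoke the asymptotic analysis of~\cite{Nocedal2002} verbatim for the sequences $\xi_i^{(n)}$: the hypotheses $v_1^\mathsf{H}(H)g_0\ne0$ and $v_N^\mathsf{H}(H)g_0\ne0$ guarantee $\xi_1^{(0)},\xi_N^{(0)}>0$ (and these stay positive), the intermediate components decay relative to the extreme ones so that $g_n$ concentrates asymptotically on $\operatorname{span}(v_1(H),v_N(H))$, and the even and odd subsequences of the ratio $t_n=\xi_N^{(n)}/\xi_1^{(n)}$ converge. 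Establishing this concentration and the existence of the subsequential limits is the one genuinely hard ingredient, and it is precisely what the reduction to the $\xi_i^{(n)}$ lets me borrow unchanged from the real theory.

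It then remains to compute the four limits in the two-dimensional limiting regime. Writing $\kappa=\kappa(H)=\lambda_N(H)/\lambda_1(H)$, the steplength becomes $\alpha_n^\text{SD}=\lambda_1(H)^{-1}(1+t_n)/(1+\kappa t_n)$, and a short calculation gives
\[
1-\alpha_n\lambda_1(H)=\frac{(\lambda_N(H)-\lambda_1(H))\,t_n}{\lambda_1(H)+\lambda_N(H)t_n},\qquad
1-\alpha_n\lambda_N(H)=\frac{-(\lambda_N(H)-\lambda_1(H))}{\lambda_1(H)+\lambda_N(H)t_n}.
\]
Feeding these into $t_{n+1}=\big((1-\alpha_n\lambda_N(H))/(1-\alpha_n\lambda_1(H))\big)^2 t_n$ collapses everything to $t_{n+1}=1/t_n$, so the ratio is asymptotically two-periodic. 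Setting $c^2=\lim_n t_{2n}$ forces $\lim_n t_{2n+1}=1/c^2$, and substituting $t_{2n}\to c^2$ and $t_{2n+1}\to 1/c^2$ into the formula for $\alpha_n^\text{SD}$ yields~\eqref{eq:1.1} and~\eqref{eq:1.2}.

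For the gradient norms I would write $\norm{g_{n+1}}^2/\norm{g_n}^2=(1-\alpha_n\lambda_1(H))^2(1+t_{n+1})/(1+t_n)$; since $t_{n+1}=1/t_n$ the last factor equals $1/t_n$, giving $\norm{g_{n+1}}^2/\norm{g_n}^2=(\lambda_N(H)-\lambda_1(H))^2 t_n/(\lambda_1(H)+\lambda_N(H)t_n)^2$. Evaluating this at $t_{2n}\to c^2$ and at $t_{2n+1}\to 1/c^2$ and dividing numerator and denominator by $\lambda_1(H)^2$ produces~\eqref{eq:1.3} and~\eqref{eq:1.4}. The only subtlety beyond bookkeeping is the concentration and convergence step above; everything downstream is the closed-form analysis of the scalar map $t\mapsto 1/t$.
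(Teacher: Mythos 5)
Your proof is correct, and it reaches the real theory of Nocedal et al.~\cite{Nocedal2002} by a genuinely different reduction than the paper. The paper keeps the iteration intact and changes the matrix: it rewrites the Hermitian system in the standard $2N\times 2N$ real block form $\tilde{H}\tilde{x}=\tilde{b}$, verifies that SD applied to this real system produces the same steplengths and gradient norms as SD applied to the complex system (via $g_n^\mathsf{H}g_n=\tilde{g}_n^\intercal\tilde{g}_n$ and $g_n^\mathsf{H}Hg_n=\tilde{g}_n^\intercal\tilde{H}\tilde{g}_n$), and then cites the real results verbatim. You instead diagonalize: expanding $g_n$ in the orthonormal eigenbasis of $H$, you observe that the steplengths are real, so the squared moduli $\xi_i^{(n)}=\abs{v_i^\mathsf{H}(H)g_n}^2$ obey exactly the recursion of the squared gradient coefficients of a real diagonal problem with the same spectrum, while $\alpha_n^\text{SD}$ and $\norm{g_n}^2$ depend on the $\xi_i^{(n)}$ alone; you borrow the concentration and subsequence-convergence facts from the real theory and finish by explicit computation in the limiting two-dimensional dynamics $t\mapsto 1/t$. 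Your route has a concrete technical advantage: the paper's real form $\tilde{H}$ carries every eigenvalue of $H$ with multiplicity two, so the hypothesis $\lambda_1<\dots<\lambda_N$ of the cited real results fails for $\tilde{H}$ and must be repaired by the eigenvector-selection remark the paper only makes after the lemma; your $N$-dimensional real surrogate has the same simple spectrum as $H$, so the hypotheses of the real theory apply as stated. The price is that your closing computations re-derive formulas \eqref{eq:1.1}--\eqref{eq:1.4} that the paper simply imports; that part of your argument is sound (the identities $t_{n+1}=1/t_n$, the steplength formula $\alpha_n^\text{SD}=(1+t_n)/(\lambda_1(H)(1+\kappa(H)t_n))$, and the norm-ratio formula all check out), and it has the side benefit of making the constant explicit as $c^2=\lim_n t_{2n}$, but the limit-existence and concentration step remains, as you correctly flag, the one ingredient that must still be borrowed rather than proved.
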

\begin{proof}
Let Re$(\cdot)$ and Im$(\cdot)$ be the real and imaginary parts, respectively.
The coefficients of system~\eqref{eq:ls:hpd} have the following form
\[
H = \text{Re}(H)+\iota\text{Im}(H),\quad x = \text{Re}(x)+\iota\text{Im}(x),\quad \hat{b} = \text{Re}(\hat{b})+\iota\text{Im}(\hat{b}),
\]
where $\iota$ denotes the imaginary unit.
It is possible to rewrite system~\eqref{eq:ls:hpd} into the real equivalent form
\begin{equation}
\label{eq:pf:1.1}
\tilde{H}\tilde{x} =
\left(\begin{array}{cc}
\text{Re}(H) & -\text{Im}(H) \\[1ex]
\text{Im}(H) & \text{Re}(H)
\end{array}\right)
\left(\begin{array}{c}
\text{Re}(x) \\[1ex]
\text{Im}(x)
\end{array}\right)
=
\left(\begin{array}{c}
\text{Re}(\hat{b}) \\[1ex]
\text{Im}(\hat{b})
\end{array}\right)
= \tilde{b}.
\end{equation}
By Lemma~3.3 and Theorem~5.1 in Nocedal et al., 2002~\cite{Nocedal2002}, it is known that results~\eqref{eq:1.1} to~\eqref{eq:1.4} hold in the real case.
To prove the desired result in the Hermitian case, it suffices to show that SD applied to~\eqref{eq:pf:1.1} is equivalent to that for~\eqref{eq:ls:hpd}, namely, they should yield the same sequences of gradient vectors and steplengths.
One finds that
\begin{equation}
\label{eq:pf:1.2}
g_n = (\text{Re}(H)+\iota\text{Im}(H))(\text{Re}(x_n)+\iota\text{Im}(x_n)) - (\text{Re}(\hat{b})+\iota\text{Im}(\hat{b})) = \varphi_n+\iota\psi_n,
\end{equation}
where
\[
\begin{split}
\varphi_n &= \text{Re}(H)\text{Re}(x_n)-\text{Im}(H)\text{Im}(x_n)-\text{Re}(\hat{b}), \\[1ex]
\psi_n &= \text{Re}(H)\text{Im}(x_n)+\text{Im}(H)\text{Re}(x_n)-\text{Im}(\hat{b}).
\end{split}
\]
Assume that the $2$ blocks in~$\tilde{x}_n$ is the same as the real and imaginary parts of~$x_n$, respectively.
Then, from~\eqref{eq:pf:1.1} one obtains that
\begin{equation}
\label{eq:pf:1.3}
\tilde{g}_n = \tilde{H}\tilde{x}_n-\tilde{b} =
\left(\begin{array}{c}
\varphi_n \\[1ex]
\psi_n
\end{array}\right).
\end{equation}
On the other hand, let
\[
\tilde{\alpha}_n=\frac{\tilde{g}_n^\intercal\tilde{g}_n}{\tilde{g}_n^\intercal\tilde{H}\tilde{g}_n}.
\]
Combining~\eqref{eq:pf:1.2} and~\eqref{eq:pf:1.3} implies $g_n^\mathsf{H}g_n=\tilde{g}_n^\intercal\tilde{g}_n$.
Since $\text{Im}(H)^\intercal=-\text{Im}(H)$, it follows that $u^\intercal\text{Im}(H)u=0$ for all $u\in\mathbb{R}^N$, from which one obtains that
\[
\text{Re}(g_n)^\intercal\text{Im}(H)\text{Re}(g_n)=0,\quad \text{Im}(g_n)^\intercal\text{Im}(H)\text{Im}(g_n)=0.
\]
Hence, the following result holds:
\[
\begin{split}
g_n^\mathsf{H} Hg_n &= (\text{Re}(g_n)+\iota\text{Im}(g_n))^\mathsf{H}(\text{Re}(H)+\iota\text{Im}(H))(\text{Re}(g_n)+\iota\text{Im}(g_n)) \\[1ex]
&= \text{Re}(g_n)^\intercal\text{Re}(H)\text{Re}(g_n) + \text{Im}(g_n)^\intercal\text{Re}(H)\text{Im}(g_n)+2\text{Im}(g_n)^\intercal\text{Im}(H)\text{Re}(g_n)
\end{split}
\]
Along with~\eqref{eq:pf:1.1}, this implies that $g_n^\mathsf{H}Hg_n=\tilde{g}_n^\intercal\tilde{H}\tilde{g}_n$, according to which one finds that $\tilde{\alpha}_n=\alpha_n$ when the $2$ blocks in~$\tilde{g}_n$ are equal to the real and imaginary parts of~$g_n$, respectively.
Hence, the SD iteration for Hermitian system~\eqref{eq:ls:hpd} and that for $2$-by-$2$ real form yield exactly the same sequence of solutions.
Since properties~\eqref{eq:1.1} to~\eqref{eq:1.4} in the real case has been proved by Nocedal et al.~\cite{Nocedal2002}, we arrive at the desired conclusion.
\end{proof}

Concerning the assumption used in Lemma~\ref{thm:1}, if there exist repeated eigenvalues, then we can choose the eigenvectors so that the corresponding gradient components vanish~\cite{Fletcher2005}.
If $v_1^\mathsf{H}(H) g_0=0$ or $v_N^\mathsf{H}(H) g_0=0$, then the second condition can be replaced by inner eigenvectors with no effect on the theoretical results.

It took some time before the spectral properties described by Nocedal et al.~\cite{Nocedal2002} were applied for solving linear systems.
De~Asmundis et al.~\cite{DeAsmundis2013} proposed an auxiliary steplength
\begin{equation}
\label{eq:a}
\alpha_n^\text{A} = \left(\frac{1}{\alpha_{n-1}^\text{SD}}+\frac{1}{\alpha_n^\text{SD}}\right)^{-1},
\end{equation}
which could be used for efficient implementations of gradient methods.
The major result is a direct consequence of~\eqref{eq:1.1} and~\eqref{eq:1.2}.
We state the lemma without proof, see De~Asmundis et al., 2013~\cite{DeAsmundis2013} for further discussion.
\begin{lemma}
\label{thm:2}
Under the assumptions of Lemma~\ref{thm:1}, the following result holds
\begin{equation}
\label{eq:2.1}
\lim_{n\rightarrow\infty}\alpha_n^\text{A} = \frac{1}{\lambda_1(H)+\lambda_N(H)}.
\end{equation}
\end{lemma}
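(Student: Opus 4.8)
The plan is to reduce the claim to a direct substitution of the two-parity limits from Lemma~\ref{thm:1} into the defining relation~\eqref{eq:a}. Rewriting that relation as $1/\alpha_n^\text{A} = 1/\alpha_{n-1}^\text{SD} + 1/\alpha_n^\text{SD}$, it suffices to evaluate $\lim_{n\to\infty}\bigl(1/\alpha_{n-1}^\text{SD} + 1/\alpha_n^\text{SD}\bigr)$ and then take the reciprocal.

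First I would record the two reciprocal limits implied by~\eqref{eq:1.1} and~\eqref{eq:1.2}. Writing $L_e = \lim_n 1/\alpha_{2n}^\text{SD}$ and $L_o = \lim_n 1/\alpha_{2n+1}^\text{SD}$, inverting the right-hand sides of~\eqref{eq:1.1}--\eqref{eq:1.2} and using the identity $\kappa(H)\lambda_1(H)=\lambda_N(H)$ gives
\[
L_e = \frac{\lambda_1(H)+c^2\lambda_N(H)}{1+c^2}, \qquad L_o = \frac{c^2\lambda_1(H)+\lambda_N(H)}{1+c^2}.
\]
The key observation is then that any two consecutive indices $n-1,n$ comprise exactly one even and one odd index, so for every $n$ the quantity $1/\alpha_{n-1}^\text{SD}+1/\alpha_n^\text{SD}$ is the sum of one even-indexed and one odd-indexed reciprocal. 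Since both the even and the odd subsequences of $1/\alpha_n^\text{SD}$ converge by Lemma~\ref{thm:1}, the interleaved sequence converges, for both parities of $n$, to the same value $L_e+L_o$. Adding the two displayed expressions, the denominators combine and the constant $c$ cancels:
\[
L_e+L_o = \frac{(1+c^2)\lambda_1(H)+(1+c^2)\lambda_N(H)}{1+c^2} = \lambda_1(H)+\lambda_N(H).
\]
Taking reciprocals yields~\eqref{eq:2.1}.

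There is essentially no analytic obstacle here; the only point requiring a little care is the interchange of limit and sum, namely justifying that convergence of the even and odd subsequences of $1/\alpha_n^\text{SD}$ forces convergence of the interleaved sequence $\{1/\alpha_{n-1}^\text{SD}+1/\alpha_n^\text{SD}\}$ to $L_e+L_o$ independently of the parity of $n$. The pleasant feature of the argument, and the reason the statement is worth isolating, is the cancellation of the factor $1+c^2$: it removes all dependence on the unknown constant $c$, so that $\alpha_n^\text{A}$ furnishes, in the limit, a parameter-free spectral estimate.
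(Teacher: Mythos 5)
Your proof is correct and follows exactly the route the paper intends: the paper states Lemma~\ref{thm:2} without proof, calling it ``a direct consequence of~\eqref{eq:1.1} and~\eqref{eq:1.2}'' (deferring details to De~Asmundis et al.), and your argument is precisely that direct consequence, carried out by inverting the two parity limits, summing them so the factor $1+c^2$ cancels, and handling the interleaving of even and odd indices. Nothing further is needed.
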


Another direction of approach was based on a delicate derivation by Yuan~\cite{Yuan2006}.
Let us write $\alpha_n^\text{RA}=\left(\alpha_n^\text{A}\right)^{-1}$ and
\begin{equation}
\label{eq:Gamma}
\Gamma_n = \frac{1}{\alpha_{n-1}^\text{SD}\alpha_n^\text{SD}}-\frac{\norm{g_n}^2}{\left(\alpha_{n-1}^\text{SD}\right)^2 \norm{g_{n-1}}^2}.
\end{equation}
Yuan~\cite{Yuan2006} developed a new auxiliary steplength of the form
\begin{equation}
\label{eq:Y}
\alpha_n^\text{Y} = \frac{2}{\alpha_n^\text{RA}+\sqrt{\left(\alpha_n^\text{RA}\right)^2 - 4\Gamma_n}}.
\end{equation}
which leads to some $2$-dimensional finite termination methods for solving system~\eqref{eq:ls:hpd}~\cite{Yuan2006}.
Let us now introduce an alternative steplength
\begin{equation}
\label{eq:Z}
\alpha_n^\text{Z} = \frac{2}{\alpha_n^\text{RA}-\sqrt{\left(\alpha_n^\text{RA}\right)^2 - 4\Gamma_n}}.
\end{equation}
Let us write $\alpha_n^\text{RY}=\left(\alpha_n^\text{Y}\right)^{-1}$ and $\alpha_n^\text{RZ}=\left(\alpha_n^\text{Z}\right)^{-1}$.
It follows that
\[
\alpha_n^\text{RY}+\alpha_n^\text{RZ} = \alpha_n^\text{RA},\quad \alpha_n^\text{RY}\alpha_n^\text{RZ} = \Gamma_n.
\]
The spectral properties of~\eqref{eq:Gamma}, \eqref{eq:Y} and~\eqref{eq:Z} are shown in Lemma~\ref{thm:3}.
Note that the equations~\eqref{eq:3.1} and~\eqref{eq:3.2} have appeared in De~Asmundis et al., 2014~\cite{DeAsmundis2014} for the real case.
Below, we extend equations~\eqref{eq:3.1} and~\eqref{eq:3.2} for the Hermitian case and also one new equation.
\begin{lemma}
\label{thm:3}
Under the assumptions of Lemma~\ref{thm:1}, the following limits hold
\begin{equation}
\label{eq:3.1}
\lim_{n\rightarrow\infty}\Gamma_n = \lambda_1(H)\lambda_N(H).
\end{equation}
\begin{equation}
\label{eq:3.2}
\lim_{n\rightarrow\infty}\alpha_n^\text{Y} = \frac{1}{\lambda_N(H)}.
\end{equation}
\begin{equation}
\label{eq:3.3}
\lim_{n\rightarrow\infty}\alpha_n^\text{Z} = \frac{1}{\lambda_1(H)}.
\end{equation}
\end{lemma}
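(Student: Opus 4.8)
The plan is to evaluate each of the three limits by substituting the asymptotic expressions from Lemma~\ref{thm:1} and Lemma~\ref{thm:2} directly into the definitions~\eqref{eq:Gamma}, \eqref{eq:Y} and~\eqref{eq:Z}. Writing $\kappa=\kappa(H)=\lambda_N(H)/\lambda_1(H)$, the main obstacle is that $\Gamma_n$ is not obviously convergent: the steplengths $\alpha_n^\text{SD}$ and the gradient ratios $\norm{g_n}^2/\norm{g_{n-1}}^2$ each possess \emph{two} distinct limits, one along the even indices and one along the odd indices, so a priori $\Gamma_n$ could oscillate with the parity of~$n$. The heart of the argument is to show that the particular combination defining $\Gamma_n$ cancels this oscillation. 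First I would therefore treat the even and odd cases of~\eqref{eq:Gamma} separately.

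For even $n$, I would feed~\eqref{eq:1.1}, \eqref{eq:1.2} and~\eqref{eq:1.4} into the two terms of~\eqref{eq:Gamma}. The first term $1/(\alpha_{n-1}^\text{SD}\alpha_n^\text{SD})$ tends to $\lambda_1^2(c^2+\kappa)(1+c^2\kappa)/(1+c^2)^2$, while in the second term $\norm{g_n}^2/\bigl((\alpha_{n-1}^\text{SD})^2\norm{g_{n-1}}^2\bigr)$ the factors $(c^2+\kappa)^2$ cancel, leaving the limit $\lambda_1^2 c^2(\kappa-1)^2/(1+c^2)^2$. For odd $n$ the roles of~\eqref{eq:1.1}/\eqref{eq:1.2} and of~\eqref{eq:1.3}/\eqref{eq:1.4} are swapped, but because the product $\alpha_{n-1}^\text{SD}\alpha_n^\text{SD}$ is symmetric in the two steplengths and an analogous cancellation of squared factors (now $(1+c^2\kappa)^2$) occurs, the two terms converge to exactly the same pair of values as in the even case.

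The key step, which carries the real content, is then the algebraic identity
\[
(c^2+\kappa)(1+c^2\kappa)-c^2(\kappa-1)^2 = \kappa(1+c^2)^2,
\]
which makes the entire dependence on the unknown constant $c$ disappear. Combined with the common prefactor $\lambda_1^2/(1+c^2)^2$, it yields $\lim_n\Gamma_n=\lambda_1^2\kappa=\lambda_1(H)\lambda_N(H)$ irrespective of parity, establishing~\eqref{eq:3.1}.

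Finally, \eqref{eq:3.2} and~\eqref{eq:3.3} follow immediately. By Lemma~\ref{thm:2} the reciprocal $\alpha_n^\text{RA}=(\alpha_n^\text{A})^{-1}$ tends to $\lambda_1(H)+\lambda_N(H)$, so the discriminant obeys $(\alpha_n^\text{RA})^2-4\Gamma_n\to(\lambda_1+\lambda_N)^2-4\lambda_1\lambda_N=(\lambda_N-\lambda_1)^2\ge 0$, whence $\sqrt{(\alpha_n^\text{RA})^2-4\Gamma_n}\to\lambda_N(H)-\lambda_1(H)$. Substituting into~\eqref{eq:Y} and~\eqref{eq:Z} gives $\lim_n\alpha_n^\text{Y}=2/\bigl((\lambda_1+\lambda_N)+(\lambda_N-\lambda_1)\bigr)=1/\lambda_N(H)$ and $\lim_n\alpha_n^\text{Z}=2/\bigl((\lambda_1+\lambda_N)-(\lambda_N-\lambda_1)\bigr)=1/\lambda_1(H)$. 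The only point needing a word of care here is the sign of the square root, which is fixed by the assumption $\lambda_1(H)<\dots<\lambda_N(H)$ inherited from Lemma~\ref{thm:1}.
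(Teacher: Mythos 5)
Your proof is correct and follows essentially the same route as the paper: both compute the limit of the product term $1/(\alpha_{n-1}^\text{SD}\alpha_n^\text{SD})$ and of the gradient-ratio term, observe that the latter is parity-independent, combine them to get~\eqref{eq:3.1}, and then use Lemma~\ref{thm:2} to evaluate the discriminant $(\alpha_n^\text{RA})^2-4\Gamma_n\to(\lambda_1(H)-\lambda_N(H))^2$ for~\eqref{eq:3.2} and~\eqref{eq:3.3}. Your version is in fact more explicit than the paper's, which leaves the even/odd cancellation and the identity $(c^2+\kappa)(1+c^2\kappa)-c^2(\kappa-1)^2=\kappa(1+c^2)^2$ to the reader.
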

\begin{proof}
Combining~\eqref{eq:1.1} and~\eqref{eq:1.2} implies
\begin{equation}
\label{eq:pf:3.1}
\lim_{n\rightarrow\infty}\alpha_{n-1}^\text{SD}\alpha_n^\text{SD} = \frac{(1+c^2)^2}{\lambda_1^2(H)(c^2+\kappa(H))(1+c^2\kappa(H))}.
\end{equation}
Combining~\eqref{eq:1.1} to~\eqref{eq:1.4}, one could deduce that
\[
\lim_{n\rightarrow\infty}\frac{1}{(\alpha_{2n}^\text{SD})^2}\lim_{n\rightarrow\infty}\frac{\norm{g_{2n+1}}^2}{\norm{g_{2n}}^2} = \lim_{n\rightarrow\infty}\frac{1}{(\alpha_{2n+1}^\text{SD})^2}\lim_{n\rightarrow\infty}\frac{\norm{g_{2n+2}}^2}{\norm{g_{2n+1}}^2},
\]
from which one finds
\begin{equation}
\label{eq:pf:3.2}
\lim_{n\rightarrow\infty}\frac{\norm{g_n}^2}{\left(\alpha_{n-1}^\text{SD}\right)^2 \norm{g_{n-1}}^2} = \frac{\lambda_1^2(H)c^2(\kappa(H)-1)^2}{(1+c^2)^2}.
\end{equation}
The first equation follows by combining~\eqref{eq:pf:3.1} and~\eqref{eq:pf:3.2}.
Along with~\eqref{eq:2.1}, this implies that
\[
\lim_{n\rightarrow\infty}\left(\left(\alpha_n^\text{RA}\right)^2 - 4\Gamma_n\right) = \left(\lambda_1(H)-\lambda_N(H)\right)^2,
\]
which yields the desired limits~\eqref{eq:3.2} and~\eqref{eq:3.3}.
\end{proof}

It is noteworthy that steplengths~\eqref{eq:Y} and~\eqref{eq:Z} could be expressed as the roots of a quadratic function
\begin{equation}
\label{eq:Q}
Q_n(\alpha) = \Gamma_n\alpha^2-\alpha_n^\text{RA}\alpha+1,
\end{equation}
with
\[
Q_n(0)=1,\quad Q_n(\alpha_n^\text{A})=\Gamma_n\left(\alpha_n^\text{A}\right)^2,
\]
\[
Q_n(\alpha_{n-1}^\text{SD})=-\frac{\norm{g_n}^2}{\norm{g_{n-1}}^2},\quad Q_n(\alpha_n^\text{SD})=-\frac{\left(\alpha_n^\text{SD}\right)^2\norm{g_n}^2}{\left(\alpha_{n-1}^\text{SD}\right)^2\norm{g_{n-1}}^2},
\]
from which one could observe that $\Gamma_n>0$ and
\[
\alpha_n^\text{A} < \alpha_n^\text{Y}<\min\{\alpha_{n-1}^\text{SD},\,\alpha_n^\text{SD}\}.
\]
As mentioned by Yuan~\cite{Yuan2006}, a slightly shortened steplength would improve the efficiency of steepest descent.
This is one reason why the Yuan steplength could be fruitfully used in alternate gradient methods~\cite{Dai2005c,DeAsmundis2014}.

As an example, assume that $x_0=0$ and
\begin{equation}
\label{eq:d8}
H = \text{diag}(1,\,2,\,10,\,20,\,100,\,200,\,1000,\,2000).
\end{equation}
Assume that $\hat{b}$ is constructed by $\hat{b}=Hx_*$ where $x_*$ is a vector of all ones.
We plot in Figure~\ref{fig:1} the curves of~\eqref{eq:Q} for a few representative iteration numbers.
\begin{figure}[t]
\centerline{\includegraphics[width=.7\linewidth]{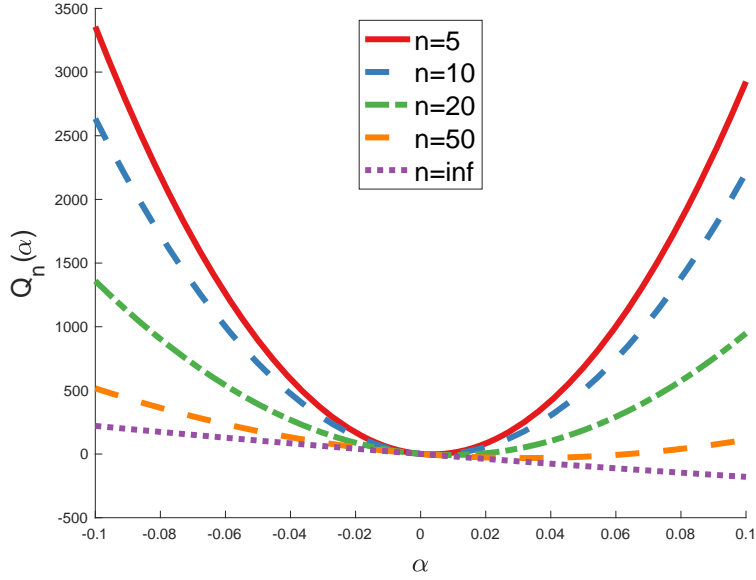}}
\caption{Curves of~$Q_n(\alpha)$ for a few representative iteration numbers. Steepest descent is used for solving system~\eqref{eq:ls:hpd} where $H$ satisfies~\eqref{eq:d8} and $\hat{b}$ is a vector of all ones.\label{fig:1}}
\end{figure}
This figure shows that the curves of $Q_n(\alpha)$ corresponding to steepest descent converge to the limit, as proved in Lemma~\ref{thm:2} and Lemma~\ref{thm:3}.

\section{Application to HSS iterations}
\label{sec:3}

\subsection{Preliminary considerations}
\label{sec:3.1}

In this section we first try to compute estimates for parameter~$\gamma$ in the HSS method.
One possible solution is to simply choose $\gamma=1$ without resorting to special techniques, but experience shows that it often leads to very slow convergence or even divergence, depending on the system being solved.
Another approach is based on the observation that $\gamma$ was introduced to enable the bounded convergence, as seen in~\eqref{eq:bd}, and it is possible to express it differently.
As an example consider a positive definite diagonal matrix $D$ such that
\begin{equation}
\label{eq:dhss}
\left\{\begin{array}{l}
(D + H)x_{n+\frac{1}{2}} = (D - S)x_n + b, \\[1ex]
(D + S)x_{n+1} = (D - H)x_{n+\frac{1}{2}} + b.
\end{array}\right.
\end{equation}
As a result, the iteration matrix is of the form
\[
T_D = (D+S)^{-1}(D-H)(D+H)^{-1}(D-S).
\]
Notice that~\eqref{eq:dhss} is a special case of preconditioned HSS~\cite{Bertaccini2005} when choosing $\gamma=1$ and $P=D$.
In particular, the fact that Theorem~2.1 in Bertaccini et al., 2005~\cite{Bertaccini2005} holds for~\eqref{eq:dhss} implies $\rho(T_D)<1$, yielding the guaranteed convergence.

On the basis of similar reasoning as in HSS~\cite{Bai2003}, the spectral radius is bounded by
\[
\rho(T_D) \le \norm{(D-H)(D+H)^{-1}}.
\]
A natural idea is to seek $D$ so that the upper bound is small.
At first glance we may choose $D$ as the diagonal elements of $H$.
Inspired by the diagonal weighted matrix in Freund, 1992~\cite{Freund1992}, the Euclidean norms of column vectors could also be exploited.
However, the common experience is that these strategies may lead to a stagnation of convergence, and sometimes perform much worse than choosing~$\gamma=1$.
We will not pursue them further in this paper.

\subsection{Parameter estimation based on gradient iterations}
\label{sec:3.2}

It is observed that \eqref{eq:3.1} leads to a straightforward estimation of parameter~$\gamma_*$ in~\eqref{eq:gamma}.
From Figure~\ref{fig:1} we can deduce that the optimal parameter in HSS could be actually approximated by steepest descent iterations, which is shown in the following theorem.
\begin{theorem}
\label{thm:4}
Assume that the matrix~$H$ in system~\eqref{eq:ls:hpd} is the Hermitian part of~$A$ in system~\eqref{eq:ls}.
If steepest descent is used for solving~\eqref{eq:ls:hpd}, then the following limit holds
\begin{equation}
\label{eq:4.1}
\lim_{n\rightarrow\infty}\sqrt{\Gamma_n}=\gamma_*.
\end{equation}
\end{theorem}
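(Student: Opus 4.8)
The plan is to obtain \eqref{eq:4.1} as an immediate corollary of the limit \eqref{eq:3.1} already established in Lemma~\ref{thm:3}, using only continuity of the square-root function. First I would recall the definition \eqref{eq:gamma}, which states that $\gamma_* = \sqrt{\lambda_1(H)\lambda_N(H)}$; hence proving \eqref{eq:4.1} amounts to showing that $\sqrt{\Gamma_n}$ converges to $\sqrt{\lambda_1(H)\lambda_N(H)}$. Since the hypotheses on $H$ and $g_0$ from Lemma~\ref{thm:1} are carried over as standing assumptions (the eigenvalues of $H$ are distinct and the extreme gradient components do not vanish), Lemma~\ref{thm:3} applies verbatim and gives $\lim_{n\rightarrow\infty}\Gamma_n = \lambda_1(H)\lambda_N(H)$ directly.

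The remaining step is to interchange the limit and the square root. I would justify this by noting two facts: the sequence $\Gamma_n$ is strictly positive for every $n$, as was observed from the quadratic $Q_n$ in \eqref{eq:Q}, and the limiting value $\lambda_1(H)\lambda_N(H)$ is itself strictly positive because $H$ is Hermitian positive definite and therefore satisfies $0 < \lambda_1(H) < \lambda_N(H)$. The square-root map is continuous on the open positive half-line, so it commutes with a convergent sequence whose limit lies there, and one concludes $\lim_{n\rightarrow\infty}\sqrt{\Gamma_n} = \sqrt{\lim_{n\rightarrow\infty}\Gamma_n} = \sqrt{\lambda_1(H)\lambda_N(H)} = \gamma_*$, which is exactly \eqref{eq:4.1}.

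I do not expect any genuine obstacle here: all the analytic effort sits in Lemma~\ref{thm:3}, whose proof already unfolds the asymptotic steplength and gradient-ratio identities of Lemma~\ref{thm:1}. The only points requiring a word of care are confirming that the limit is strictly positive so that continuity of the root is valid at that point, and recording that Theorem~\ref{thm:4} inherits the distinct-spectrum and nonvanishing-component assumptions under which \eqref{eq:3.1} was derived; once these are in place the conclusion is a one-line passage to the limit.
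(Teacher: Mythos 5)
Your proof is correct and follows exactly the same route as the paper's: combine the definition~\eqref{eq:gamma} with the limit~\eqref{eq:3.1} from Lemma~\ref{thm:3} and the positivity $\Gamma_n>0$ observed from~\eqref{eq:Q}, then pass the limit through the square root. The paper states this in one line; your version merely makes the continuity-at-a-positive-limit argument explicit, which is a fine (if optional) elaboration.
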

\begin{proof}
Combining~\eqref{eq:gamma}, \eqref{eq:3.1} and the fact that~$\Gamma_n>0$ observed from~\eqref{eq:Q}, the desired conclusion follows.
\end{proof}

Another approach is to compute the approximation by combining Lemmas~\ref{thm:2} and~\ref{thm:3}, in which case $\gamma_*$ could be estimated without explicit access to operator~$H$.
This approach is shown in Theorem~\ref{thm:5}.
\begin{theorem}
\label{thm:5}
Assume that the matrix~$H$ in system~\eqref{eq:ls:hpd} is the Hermitian part of~$A$ in system~\eqref{eq:ls}.
If steepest descent is used for solving~$\mathcal{M}_1x=\hat{b}$, then the following limit holds
\begin{equation}
\label{eq:5.1}
\lim_{n\rightarrow\infty}\sqrt{\Gamma_n-\gamma\alpha_n^\text{RA}+\gamma}=\gamma_*.
\end{equation}
\end{theorem}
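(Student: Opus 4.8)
The plan is to observe that Theorem~\ref{thm:5} runs steepest descent on the shifted matrix $\mathcal{M}_1=\gamma I+H$ rather than on $H$ itself, and then to transport the spectral limits of Lemmas~\ref{thm:2} and~\ref{thm:3} from $H$ to $\mathcal{M}_1$. Since $H$ is Hermitian positive definite and $\gamma>0$, the matrix $\mathcal{M}_1$ is itself Hermitian positive definite, so Lemmas~\ref{thm:1}--\ref{thm:3} apply verbatim with $H$ replaced by $\mathcal{M}_1$. The only structural change is a rigid shift of the spectrum, $\lambda_i(\mathcal{M}_1)=\gamma+\lambda_i(H)$; in particular $\lambda_1(\mathcal{M}_1)=\gamma+\lambda_1(H)$ and $\lambda_N(\mathcal{M}_1)=\gamma+\lambda_N(H)$. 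Because the quantities $\alpha_n^\text{SD}$, $\alpha_n^\text{RA}$ and $\Gamma_n$ appearing in~\eqref{eq:5.1} are all generated by the steepest descent iteration on $\mathcal{M}_1$, their limits are governed by $\lambda_1(\mathcal{M}_1)$ and $\lambda_N(\mathcal{M}_1)$.

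Next I would substitute the shifted eigenvalues into the two limits already established. From~\eqref{eq:3.1} applied to $\mathcal{M}_1$,
\[
\lim_{n\to\infty}\Gamma_n=\lambda_1(\mathcal{M}_1)\lambda_N(\mathcal{M}_1)=\gamma^2+\gamma\big(\lambda_1(H)+\lambda_N(H)\big)+\lambda_1(H)\lambda_N(H),
\]
and from~\eqref{eq:2.1} together with $\alpha_n^\text{RA}=(\alpha_n^\text{A})^{-1}$,
\[
\lim_{n\to\infty}\alpha_n^\text{RA}=\lambda_1(\mathcal{M}_1)+\lambda_N(\mathcal{M}_1)=2\gamma+\lambda_1(H)+\lambda_N(H).
\]
Forming the combination in~\eqref{eq:5.1} and cancelling the shared $\gamma\big(\lambda_1(H)+\lambda_N(H)\big)$ terms then gives, using~\eqref{eq:gamma},
\[
\lim_{n\to\infty}\big(\Gamma_n-\gamma\,\alpha_n^\text{RA}\big)=\lambda_1(H)\lambda_N(H)-\gamma^2=\gamma_*^2-\gamma^2.
\]
This already isolates $\gamma_*^2$ up to an additive constant depending only on the shift.

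The main obstacle is reconciling this constant with the correction term printed in~\eqref{eq:5.1}. My computation shows that recovering $\gamma_*^2=\lambda_1(H)\lambda_N(H)$ requires adding back exactly $\gamma^2$, so that $\lim_{n\to\infty}(\Gamma_n-\gamma\alpha_n^\text{RA}+\gamma^2)=\gamma_*^2$, whereas the statement as worded adds $+\gamma$. The two agree only when $\gamma^2-\gamma=0$, that is, since $\gamma>0$, when $\gamma=1$, the normalization underlying the $D$-HSS reduction of Section~\ref{sec:3.1}. For a general shift $\gamma\neq 1$ the argument above yields $\gamma_*^2-\gamma^2+\gamma$ under the radical, so the limit is $\sqrt{\gamma_*^2-\gamma^2+\gamma}\neq\gamma_*$, and the stated identity cannot hold as written. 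I would therefore flag the $+\gamma$ in~\eqref{eq:5.1} as an apparent typographical slip for $+\gamma^2$; once corrected, the proof closes at once, because the limiting radicand $\lambda_1(H)\lambda_N(H)$ is strictly positive, the expression under the radical is eventually positive, and continuity of the square root at a positive point lets the limit pass through the root to give $\gamma_*$.
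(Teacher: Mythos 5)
Your proposal is correct and follows essentially the same route as the paper's own proof: exploit the rigid shift $\lambda_i(\mathcal{M}_1)=\gamma+\lambda_i(H)$ and combine the limits \eqref{eq:2.1} and \eqref{eq:3.1} applied to the steepest descent iteration on $\mathcal{M}_1$. The discrepancy you flag is genuine---the paper's own proof concludes with $\gamma_*=\lim_{n\rightarrow\infty}\sqrt{\Gamma_n-\gamma\alpha_n^\text{RA}+\gamma^2}$, so the $+\gamma$ in the statement \eqref{eq:5.1} (and likewise in \eqref{eq:7.1}) is indeed a typographical slip for $+\gamma^2$, harmless only in the $\gamma=1$ setting of the subsequent remark where $\gamma=\gamma^2$; your explicit continuity argument for passing the limit through the square root is a small point the paper leaves implicit.
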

\begin{proof}
Recall that $\mathcal{M}_1=\alpha I + H$.
Since
\[
\lambda_i(\mathcal{M}_1) = \gamma + \lambda_i(H)
\]
for $i=1,\,\dots,\,N$, it follows that
\[
\begin{split}
\gamma_* &= \sqrt{\lambda_1(H)\lambda_N(H)} = \sqrt{(\lambda_1(\mathcal{M}_1)-\gamma)(\lambda_N(\mathcal{M}_1)-\gamma)} \\[1ex]
&= \sqrt{\lambda_1(\mathcal{M}_1)\lambda_N(\mathcal{M}_1) - \gamma(\lambda_1(\mathcal{M}_1)+\lambda_N(\mathcal{M}_1)) + \gamma^2}
\end{split}
\]
Combining~\eqref{eq:2.1} and~\eqref{eq:3.1} implies
\[
\begin{split}
\gamma_* &= \sqrt{\lim_{n\rightarrow\infty}\Gamma_n - \gamma\lim_{n\rightarrow\infty}\alpha_n^\text{RA} + \gamma^2} \\[1ex]
&= \lim_{n\rightarrow\infty}\sqrt{\Gamma_n - \gamma\alpha_n^\text{RA} + \gamma^2}.
\end{split}
\]
This completes out proof.
\end{proof}
\begin{remark}
Practically, obtaining~$\gamma_*$ by~\eqref{eq:5.1} requires a predetermined parameter~$\gamma$.
One could choose $\gamma=1$ and give an integer~$k$ as the maximum number of iterations such that
\[
\gamma_* \approx \sqrt{\Gamma_k - \alpha_k^\text{RA} + 1},
\]
in which case the HSS algorithm might be executed at reduced costs.
\end{remark}

Another direction of approach is based on the minimal gradient (MG) steplength
\[
\alpha_n^\text{MG} = \frac{g_n^\mathsf{H} Hg_n}{g_n^\mathsf{H} H^2g_n},
\]
the spectral properties of which have been discussed by the present authors along with several new gradient methods in a separate paper.
Let
\[
\alpha_n^\text{A2} = \left(\frac{1}{\alpha_{n-1}^\text{MG}}+\frac{1}{\alpha_n^\text{MG}}\right)^{-1},\quad \tilde{\Gamma}_n = \frac{1}{\alpha_{n-1}^\text{MG}\alpha_n^\text{MG}}-\frac{g_n^\mathsf{H}Hg_n}{\left(\alpha_{n-1}^\text{MG}\right)^2 g_{n-1}^\mathsf{H}Hg_{n-1}}.
\]
Let us write $\alpha_n^\text{RA2}=\left(\alpha_n^\text{A2}\right)^{-1}$.
\begin{theorem}
\label{thm:6}
Assume that the matrix~$H$ in system~\eqref{eq:ls:hpd} is the Hermitian part of~$A$ in system~\eqref{eq:ls}.
If minimal gradient is used for solving~\eqref{eq:ls:hpd}, then the following limit holds
\begin{equation}
\label{eq:6.1}
\lim_{n\rightarrow\infty}\sqrt{\tilde{\Gamma}_n}=\gamma_*.
\end{equation}
\end{theorem}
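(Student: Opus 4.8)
The plan is to reduce the minimal gradient iteration to an ordinary steepest descent iteration by a square-root substitution, after which Theorem~\ref{thm:6} becomes a direct corollary of Lemma~\ref{thm:3}. The key observation is that if one sets $w_n = H^{1/2} g_n$, then the minimal gradient steplength for $\{g_n\}$ is exactly the steepest descent steplength for $\{w_n\}$, namely
\[
\alpha_n^\text{MG} = \frac{g_n^\mathsf{H} H g_n}{g_n^\mathsf{H} H^2 g_n} = \frac{w_n^\mathsf{H} w_n}{w_n^\mathsf{H} H w_n}.
\]
Moreover, left-multiplying the update~\eqref{eq:g} by $H^{1/2}$ gives $w_{n+1} = (I - \alpha_n^\text{MG} H) w_n$, which is precisely the steepest descent gradient recurrence with matrix $H$. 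Thus $\{w_n\}$ behaves in every respect like the gradient sequence produced by steepest descent on an HPD system with the same matrix $H$.

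First I would confirm that the hypotheses of Lemma~\ref{thm:1} transfer to $\{w_n\}$. Since $H^{1/2}$ is Hermitian positive definite with the same eigenvectors as $H$ and strictly positive eigenvalues, one has $v_1^\mathsf{H}(H) w_0 = \sqrt{\lambda_1(H)}\, v_1^\mathsf{H}(H) g_0 \ne 0$, and similarly for $v_N(H)$, so the extreme-eigenvector conditions remain valid. Next I would recognize $\tilde\Gamma_n$ as the quantity $\Gamma_n$ of~\eqref{eq:Gamma} evaluated along $\{w_n\}$: using $g_n^\mathsf{H} H g_n = w_n^\mathsf{H} w_n = \norm{w_n}^2$ together with the steplength identity above,
\[
\tilde\Gamma_n = \frac{1}{\alpha_{n-1}^\text{MG}\alpha_n^\text{MG}} - \frac{\norm{w_n}^2}{\left(\alpha_{n-1}^\text{MG}\right)^2 \norm{w_{n-1}}^2},
\]
which is exactly~\eqref{eq:Gamma} with $\{g_n\}$ replaced by $\{w_n\}$ and each $\alpha^\text{SD}$ replaced by the matching $\alpha^\text{MG}$. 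Applying~\eqref{eq:3.1} of Lemma~\ref{thm:3} to the steepest descent sequence $\{w_n\}$ then yields $\lim_{n\rightarrow\infty}\tilde\Gamma_n = \lambda_1(H)\lambda_N(H)$. Since the quadratic argument behind~\eqref{eq:Q} gives the analogue $\tilde\Gamma_n > 0$, I may pass the limit through the square root and invoke~\eqref{eq:gamma} to conclude $\lim_{n\rightarrow\infty}\sqrt{\tilde\Gamma_n} = \sqrt{\lambda_1(H)\lambda_N(H)} = \gamma_*$.

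The only delicate point is the legitimacy of the reduction itself: the asymptotic results of Lemma~\ref{thm:1} and Lemma~\ref{thm:3} were derived for a genuine steepest descent iteration, and one must ensure they apply to $\{w_n\}$. This is justified by observing that those results depend only on the gradient recurrence and on the Rayleigh-quotient form of the steplength, both of which $\{w_n\}$ satisfies with matrix $H$; equivalently, one may exhibit $\{w_n\}$ as the actual gradient sequence of steepest descent on $H$ with initial gradient $H^{1/2} g_0$, since the recurrence and steplength determine the whole sequence. Everything else is the routine bookkeeping of the substitution $w_n = H^{1/2} g_n$, so I expect no further obstacle.
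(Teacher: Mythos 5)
Your proof is correct, but it takes a genuinely different route from the paper. The paper's own proof of Theorem~\ref{thm:6} is a one-line deferral: it mimics the argument of Theorem~\ref{thm:4}, replacing the steepest descent asymptotics of Lemma~\ref{thm:3} with the analogous spectral properties of the minimal gradient steplength, which the authors state were established in a \emph{separate} paper; nothing in the present paper actually proves the MG analogue of~\eqref{eq:3.1}. You instead make the result self-contained: the substitution $w_n = H^{1/2}g_n$ turns the MG iteration into a genuine SD gradient sequence for the same matrix $H$ (the recurrence $w_{n+1}=(I-\alpha_n^\text{MG}H)w_n$ with $\alpha_n^\text{MG}=w_n^\mathsf{H}w_n/w_n^\mathsf{H}Hw_n$ is exactly~\eqref{eq:g} with~\eqref{eq:sd}, and any $w_0$ can be realized as an initial SD gradient), the hypothesis transfer $v_i^\mathsf{H}(H)w_0=\sqrt{\lambda_i(H)}\,v_i^\mathsf{H}(H)g_0\ne 0$ is handled correctly, and $\tilde\Gamma_n$ is verified to coincide with the quantity $\Gamma_n$ of~\eqref{eq:Gamma} evaluated along $\{w_n\}$, so~\eqref{eq:3.1} of Lemma~\ref{thm:3} applies verbatim. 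What your reduction buys is independence from the external reference: all MG asymptotics (the analogues of~\eqref{eq:1.1}--\eqref{eq:1.4}, \eqref{eq:2.1}, and~\eqref{eq:3.1}--\eqref{eq:3.3}) follow for free from the SD results already proved here, and the same device would also prove Theorem~\ref{thm:7}. What the paper's approach buys is brevity and a body of MG-specific theory developed elsewhere; but as a matter of logical economy within this paper, your argument is the stronger one.
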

\begin{proof}
The proof can be obtained similarly as the one in Theorem~\ref{thm:4}.
\end{proof}
\begin{theorem}
\label{thm:7}
Assume that the matrix~$H$ in system~\eqref{eq:ls:hpd} is the Hermitian part of~$A$ in system~\eqref{eq:ls}.
If minimal gradient is used for solving~$\mathcal{M}_1x=\hat{b}$, then the following limit holds
\begin{equation}
\label{eq:7.1}
\lim_{n\rightarrow\infty}\sqrt{\tilde{\Gamma}_n-\gamma\alpha_n^\text{RA2}+\gamma}=\gamma_*.
\end{equation}
\end{theorem}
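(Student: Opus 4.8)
The plan is to reproduce the argument of Theorem~\ref{thm:5} almost verbatim, with the steepest-descent quantities $\Gamma_n$ and $\alpha_n^\text{RA}$ replaced by their minimal-gradient counterparts $\tilde{\Gamma}_n$ and $\alpha_n^\text{RA2}$. The only external inputs are the minimal-gradient analogues of the limits~\eqref{eq:2.1} and~\eqref{eq:3.1}: that when MG is applied to an HPD system with matrix~$B$ one has $\lim_{n\rightarrow\infty}\tilde{\Gamma}_n=\lambda_1(B)\lambda_N(B)$ and $\lim_{n\rightarrow\infty}\alpha_n^\text{RA2}=\lambda_1(B)+\lambda_N(B)$. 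The first is precisely the limit behind Theorem~\ref{thm:6} (there specialised to $B=H$), and the second is the MG version of Lemma~\ref{thm:2}; both are furnished by the companion paper on minimal-gradient methods. I will therefore take them as given and concentrate on the spectral bookkeeping induced by the shift.

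First I would record that $\mathcal{M}_1=\gamma I+H$ is Hermitian positive definite and that its spectrum is a rigid shift of that of~$H$, so that $\lambda_i(\mathcal{M}_1)=\gamma+\lambda_i(H)$ for every~$i$. Inverting this at the two extreme indices and substituting into~\eqref{eq:gamma} yields
\[
\begin{split}
\gamma_* &= \sqrt{\lambda_1(H)\lambda_N(H)} = \sqrt{\bigl(\lambda_1(\mathcal{M}_1)-\gamma\bigr)\bigl(\lambda_N(\mathcal{M}_1)-\gamma\bigr)} \\[1ex]
&= \sqrt{\lambda_1(\mathcal{M}_1)\lambda_N(\mathcal{M}_1)-\gamma\bigl(\lambda_1(\mathcal{M}_1)+\lambda_N(\mathcal{M}_1)\bigr)+\gamma^2},
\end{split}
\]
which is the very identity that drives the proof of Theorem~\ref{thm:5}.

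Then I would run MG on $\mathcal{M}_1x=\hat{b}$, so that $\tilde{\Gamma}_n$ and $\alpha_n^\text{RA2}$ are formed from the MG iteration for~$\mathcal{M}_1$ rather than for~$H$. Feeding $B=\mathcal{M}_1$ into the two limits above replaces the product $\lambda_1(\mathcal{M}_1)\lambda_N(\mathcal{M}_1)$ and the sum $\lambda_1(\mathcal{M}_1)+\lambda_N(\mathcal{M}_1)$ in the identity by $\lim_{n\rightarrow\infty}\tilde{\Gamma}_n$ and $\lim_{n\rightarrow\infty}\alpha_n^\text{RA2}$, respectively. Because both summands converge and $\sqrt{\cdot}$ is continuous near the positive limiting radicand, the limit passes inside the root and I obtain $\gamma_*=\lim_{n\rightarrow\infty}\sqrt{\tilde{\Gamma}_n-\gamma\alpha_n^\text{RA2}+\gamma^2}$, the MG analogue of~\eqref{eq:5.1}; with the practical choice $\gamma=1$ this collapses to the form displayed in the remark after Theorem~\ref{thm:5}.

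The genuine difficulty of the whole circle of results lies not here but in establishing the minimal-gradient two-step asymptotics, the MG analogue of Lemma~\ref{thm:1} from which the limits of $\tilde{\Gamma}_n$ and $\alpha_n^\text{RA2}$ descend; this is deferred to the companion paper, and granting it the present proof is a one-to-one transcription of Theorem~\ref{thm:5}. Within the argument itself the single point needing care is the well-posedness of the square root along the sequence: I would verify that $\tilde{\Gamma}_n>0$ and that the shifted radicand $\tilde{\Gamma}_n-\gamma\alpha_n^\text{RA2}+\gamma^2$ stays nonnegative for large~$n$, mirroring the observation $\Gamma_n>0$ read off from the quadratic~\eqref{eq:Q}, so that the continuity step is justified. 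The only remaining risk is notational, namely ensuring that the matrix~$H$ in the defining formulas for $\alpha_n^\text{MG}$ and $\tilde{\Gamma}_n$ is consistently reinterpreted as the system matrix~$\mathcal{M}_1$ once the iteration is run on $\mathcal{M}_1x=\hat{b}$.
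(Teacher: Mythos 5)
Your proposal matches the paper's approach exactly: the paper's proof of Theorem~\ref{thm:7} is simply the remark that it proceeds as in Theorem~\ref{thm:5}, and your argument is precisely that transcription --- the shift identity $\lambda_i(\mathcal{M}_1)=\gamma+\lambda_i(H)$ combined with the MG analogues of Lemmas~\ref{thm:2} and~\ref{thm:3} (deferred, as the paper also does, to the companion work on minimal gradient), followed by passing the limit through the square root. One remark: your derivation yields the radicand $\tilde{\Gamma}_n-\gamma\alpha_n^\text{RA2}+\gamma^2$ whereas the theorem statement displays $+\gamma$; this mismatch is inherited from the paper itself, whose own proof of Theorem~\ref{thm:5} likewise produces $+\gamma^2$ against the stated $+\gamma$, so it is a typo in the statements (harmless when $\gamma=1$, as in the Remark) rather than a gap in your argument.
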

\begin{proof}
The proof can be obtained similarly as the one in Theorem~\ref{thm:5}.
\end{proof}

\subsection{Solution based on lagged gradient iterations}
\label{sec:3.3}

Although steepest descent has remarkable spectral properties, as an iterative method, its popularity has been overshadowed by CG.
Akaike~\cite{Akaike1959} exploited the fact that the zigzag behavior nearly always leads to slow convergence, except when initial gradient approaches an eigenvector.
This drawback can be cured with a lagged strategy, first proposed by Barzilai and Borwein~\cite{Barzilai1988}, which was later called Barzilai-Borwein (BB) method.
The idea is to provide a two-point approximation to the quasi-Newton methods, namely
\[
\alpha_n^\text{BB} = \argmin_{\alpha} \norm{\frac{1}{\alpha}\Delta x-\Delta g}^2,
\]
where $\Delta x=x_n-x_{n-1}$ and $\Delta g=g_n-g_{n-1}$, yielding
\[
\alpha_n^\text{BB} = \frac{g_{n-1}^\mathsf{H} g_{n-1}}{g_{n-1}^\mathsf{H} Hg_{n-1}}.
\]
Notice that $\alpha_n^\text{BB} = \alpha_{n-1}^\text{SD}$.
The convergence analysis was given in Raydan, 1993~\cite{Raydan1993} and Dai and Liao, 2002~\cite{Dai2002}.
For the $Q$-linear result, however, has never been proved due to its nonmonotone convergence.
It seems overall that the effect of this irregular behavior is beneficial.

For the HSS method, two iterative procedures are needed at each iteration.
Since the solution of subproblems in~\eqref{eq:hss} is sometimes as difficult as that of the original system~\eqref{eq:ls}, the inexact solvers with rather low precision are often considered, especially for ill-conditioned problems.
In practice, the first equation of~\eqref{eq:hss} is usually solve by CG, and the second equation of~\eqref{eq:hss} can be solved by CGNE~\cite{Saad2003}.
Friedlander et al.~\cite{Friedlander1999} made the observation that BB could often be competitive with CG when low precision is required.
It is known that CG is sensitive to rounding errors, while lagged gradient methods can remedy this issue~\cite{Fletcher2005,vandenDoel2012} with less computational costs per iteration.
Additionally, although BB sometimes suffers from the disadvantage of requiring increasing number of iterations for increasing condition numbers, its low-precision behavior tends to be less sensitive to the ill-conditioning.

A similar method developed by symmetry~\cite{Barzilai1988} is of the form
\[
\alpha_n^\text{BB2} = \frac{g_{n-1}^\mathsf{H} Hg_{n-1}}{g_{n-1}^\mathsf{H} H^2g_{n-1}},
\]
which imposes as well a quasi-Newton property
\[
\alpha_n^\text{BB2} = \argmin_{\alpha} \norm{\Delta x-\alpha\Delta g}^2.
\]
Notice that $\alpha_n^\text{BB2} = \alpha_{n-1}^\text{MG}$.
In the last three decades, much effort was devoted to develop new lagged gradient methods, see De~Asmundis et al., 2014~\cite{DeAsmundis2014} and the references therein.

An example is illustrated in Figure~\ref{fig:2}.
\begin{figure}[t]
\centerline{\includegraphics[width=.7\linewidth]{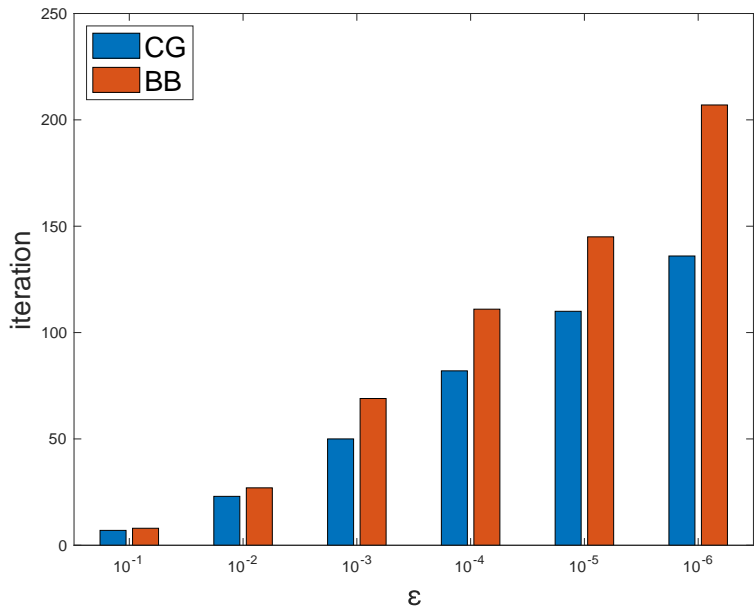}}
\caption{Comparison of CG and BB for solving system~\eqref{eq:ls:hpd} where $H$ is a diagonal matrix of size $10^3$ with $\kappa(H)=10^3$ and $\hat{b}$ is a vector of all ones.\label{fig:2}}
\end{figure}
We solve~\eqref{eq:ls:hpd} with different residual thresholds~$\varepsilon$, where $H$ is chosen as a diagonal matrix of size~$10^3$ and $\hat{b}$ is a vector of all ones.
The diagonal entries have values logarithmically distributed between $10^{-3}$ and $1$ in ascending order, with the first and the last entries equal to the limits, respectively, such that~$\kappa(H)=10^3$.
The plot shows a fairly efficient behavior of BB.

\section{Numerical experiments}
\label{sec:4}

In this section we perform some numerical tests.
Assume that iterative algorithms are started from zero vectors.
The global stopping criterion in HSS is determined by the threshold $\varepsilon = \norm{b-Ax_n}/\norm{b}$ with a fixed convergence tolerance~$10^{-6}$.
The inner stopping thresholds $\varepsilon_1$ and $\varepsilon_2$ for the two half-steps of~\eqref{eq:hss} are defined in the same way.
For gradient iterations applied to system~\eqref{eq:ls:hpd}, similarly, the stopping criterion is defined by the threshold~$\varepsilon = \lVert\hat{b}-Hx_n\lVert/\lVert\hat{b}\lVert$ with the same tolerance.
All tests are run in double precision.

\subsection{Asymptotic results of gradient iterations}
\label{sec:4.1}

The goal of the first experiment is to illustrate how the spectral properties described earlier can be used for providing a rough estimate of parameter~$\gamma_*$.
We have implemented steepest descent and minimal gradient iterations for several real matrices of size~$1000$ generated by MATLAB routine~\texttt{sprandsym}.
The right-hand side is chosen to be a vector of ones.
In Figure~\ref{fig:3}, parameter~$\gamma$ is plotted versus iteration number, under which a red dotted line marks out the position of~$\gamma_*$.
\begin{figure}[t]
\centering
\begin{subfigure}{.5\textwidth}
  \centering
  \includegraphics[width=.8\linewidth]{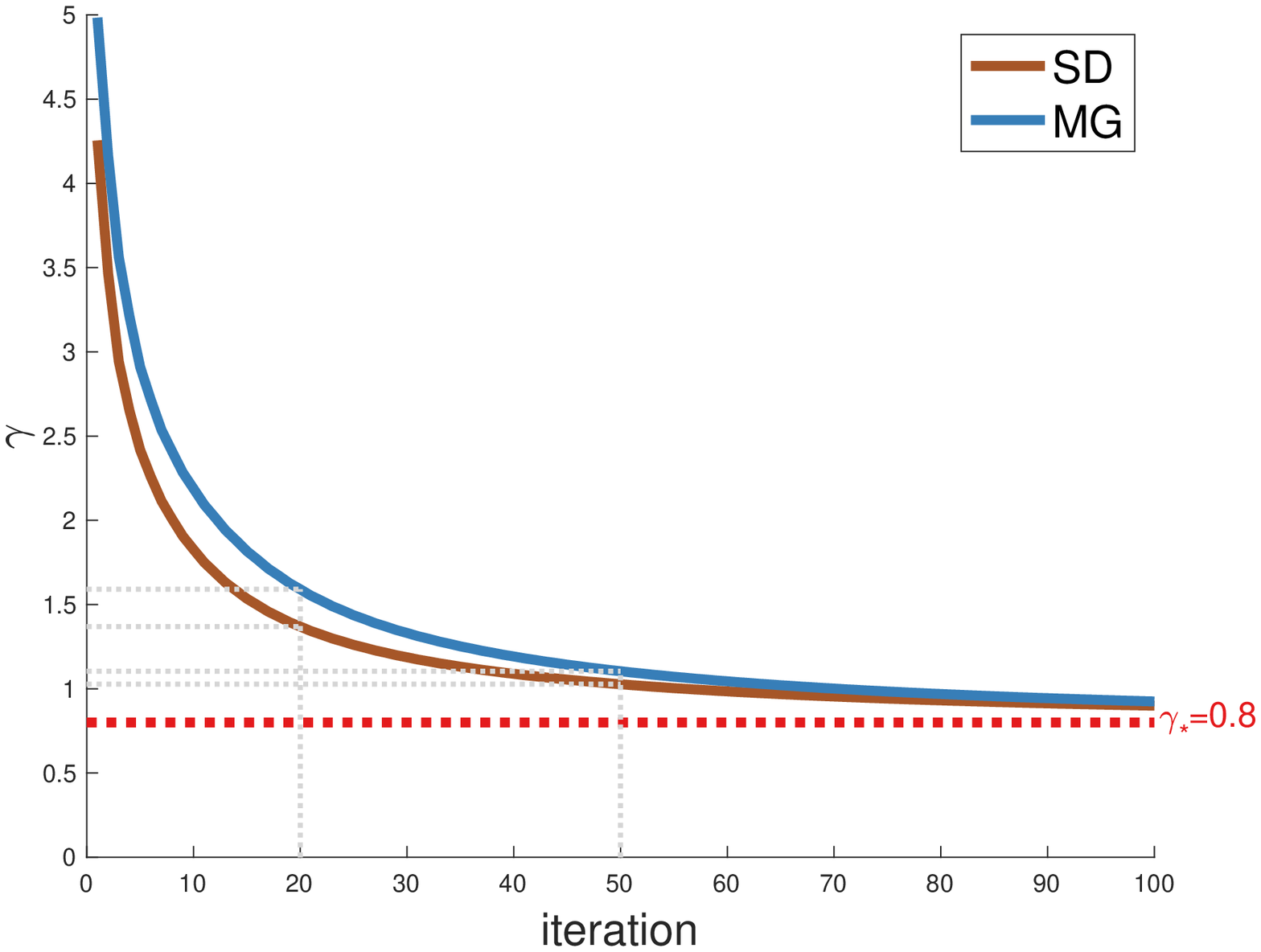}
\end{subfigure}\begin{subfigure}{.5\textwidth}
  \centering
  \includegraphics[width=.8\linewidth]{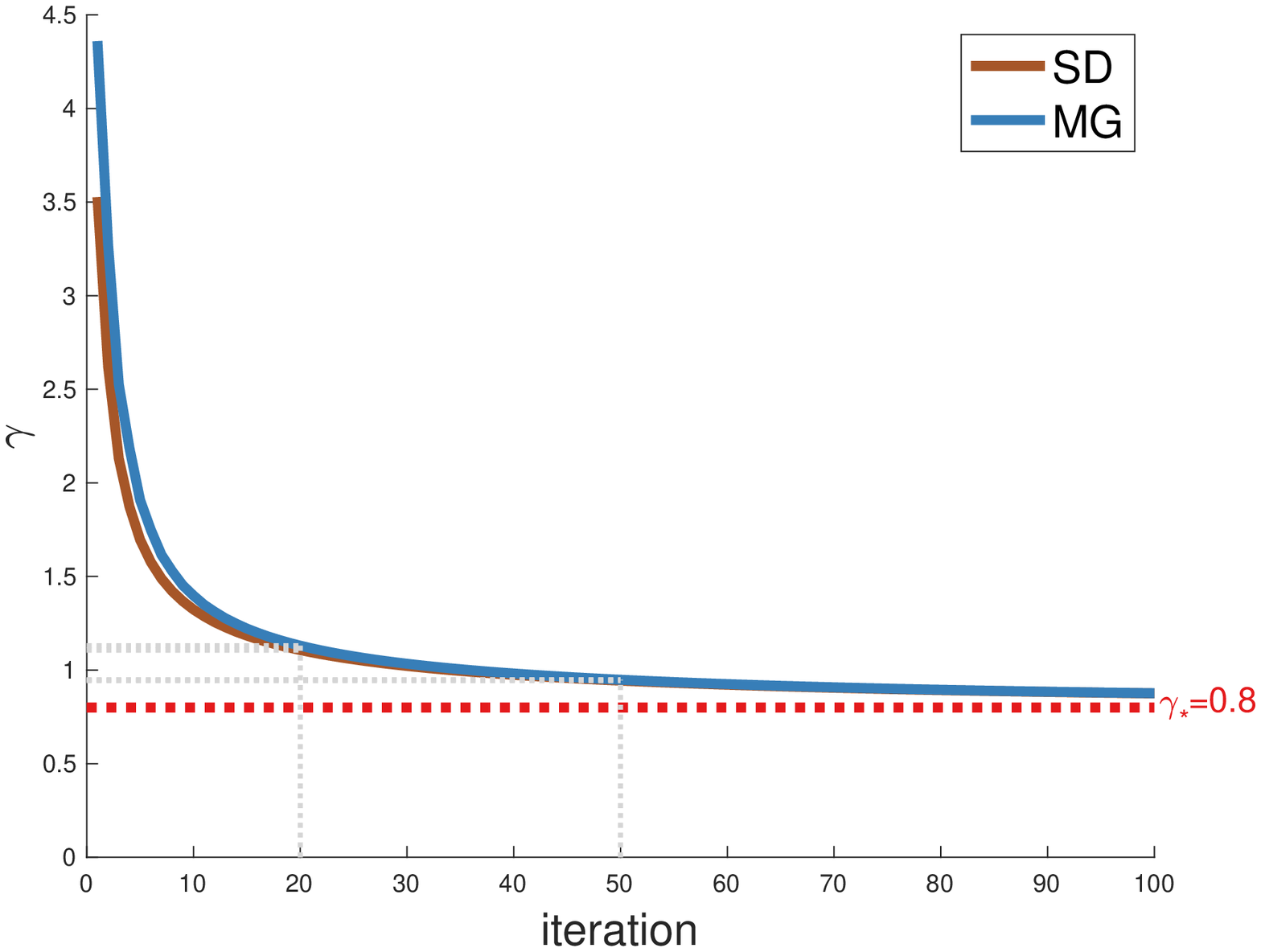}
\end{subfigure}
\begin{subfigure}{.5\textwidth}
  \centering
  \includegraphics[width=.8\linewidth]{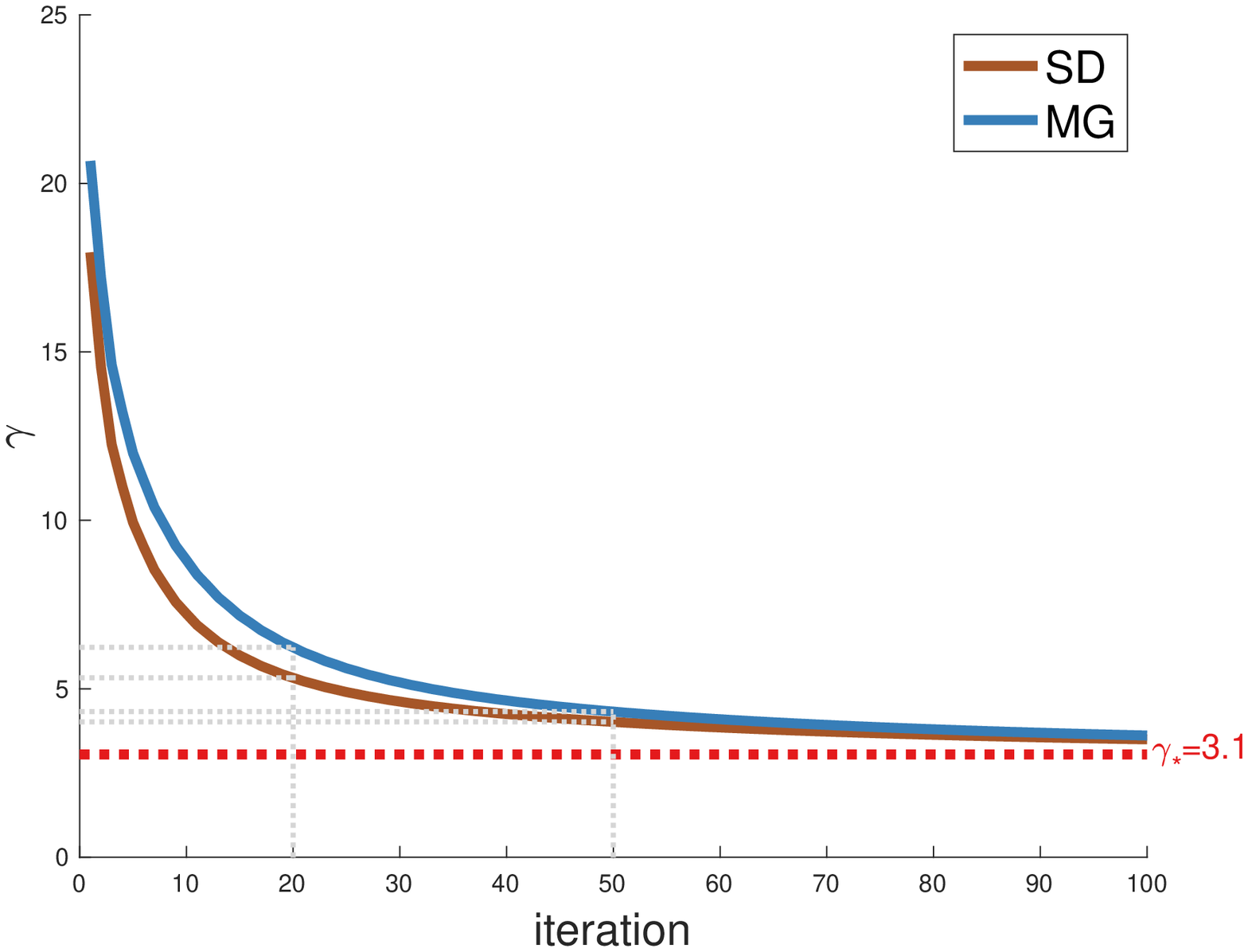}
\end{subfigure}\begin{subfigure}{.5\textwidth}
  \centering
  \includegraphics[width=.8\linewidth]{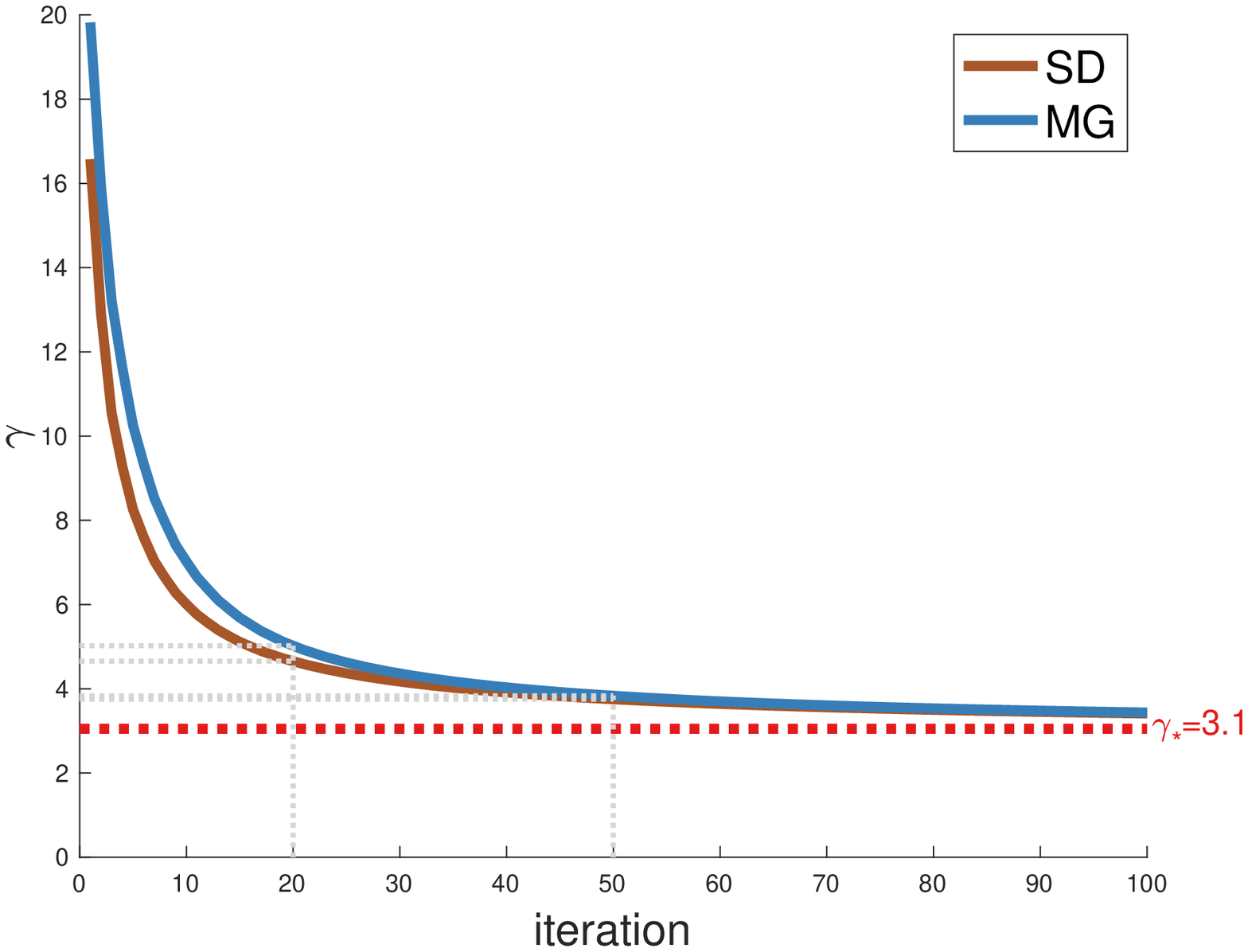}
\end{subfigure}
\begin{subfigure}{.5\textwidth}
  \centering
  \includegraphics[width=.8\linewidth]{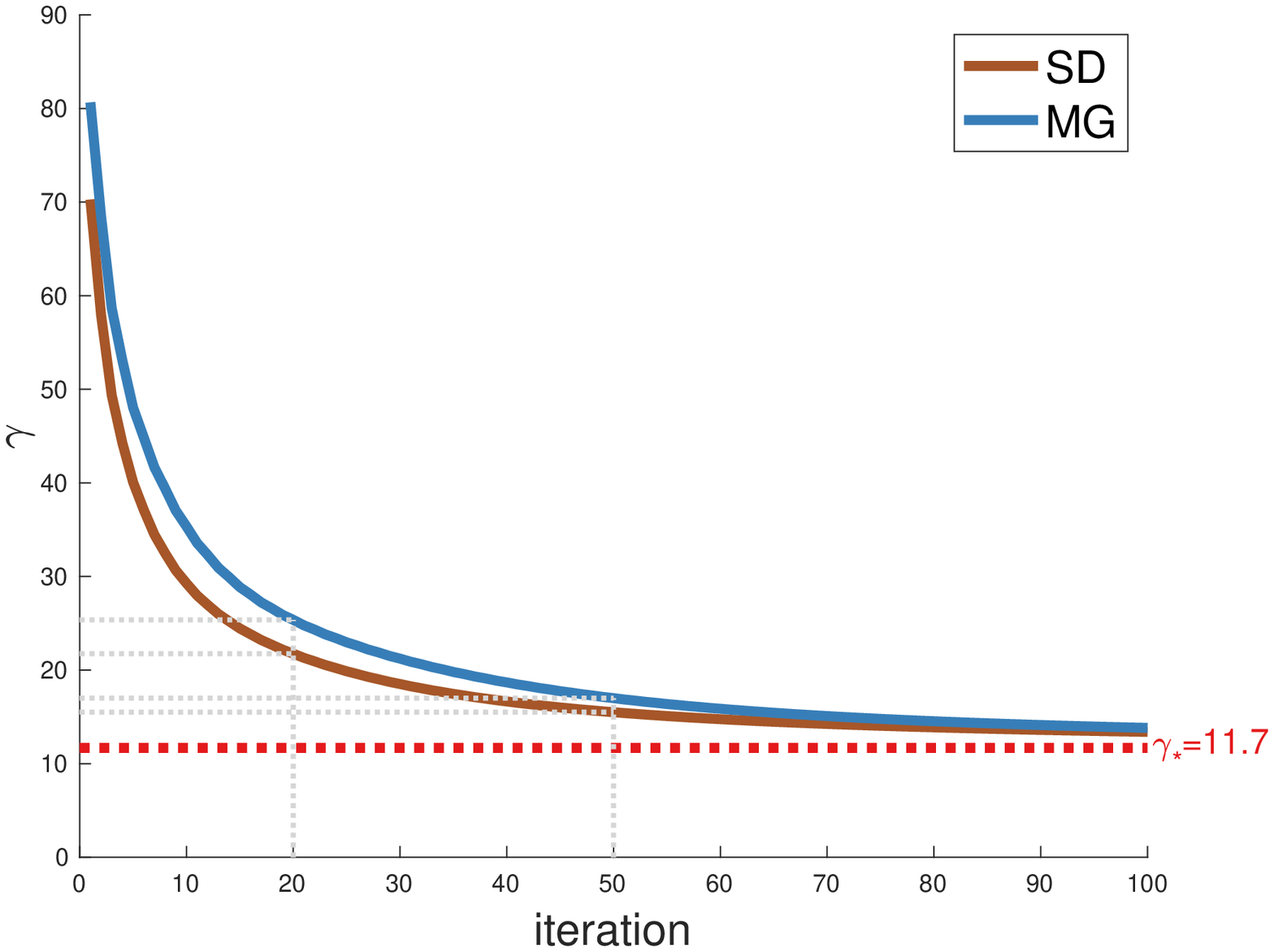}
\end{subfigure}\begin{subfigure}{.5\textwidth}
  \centering
  \includegraphics[width=.8\linewidth]{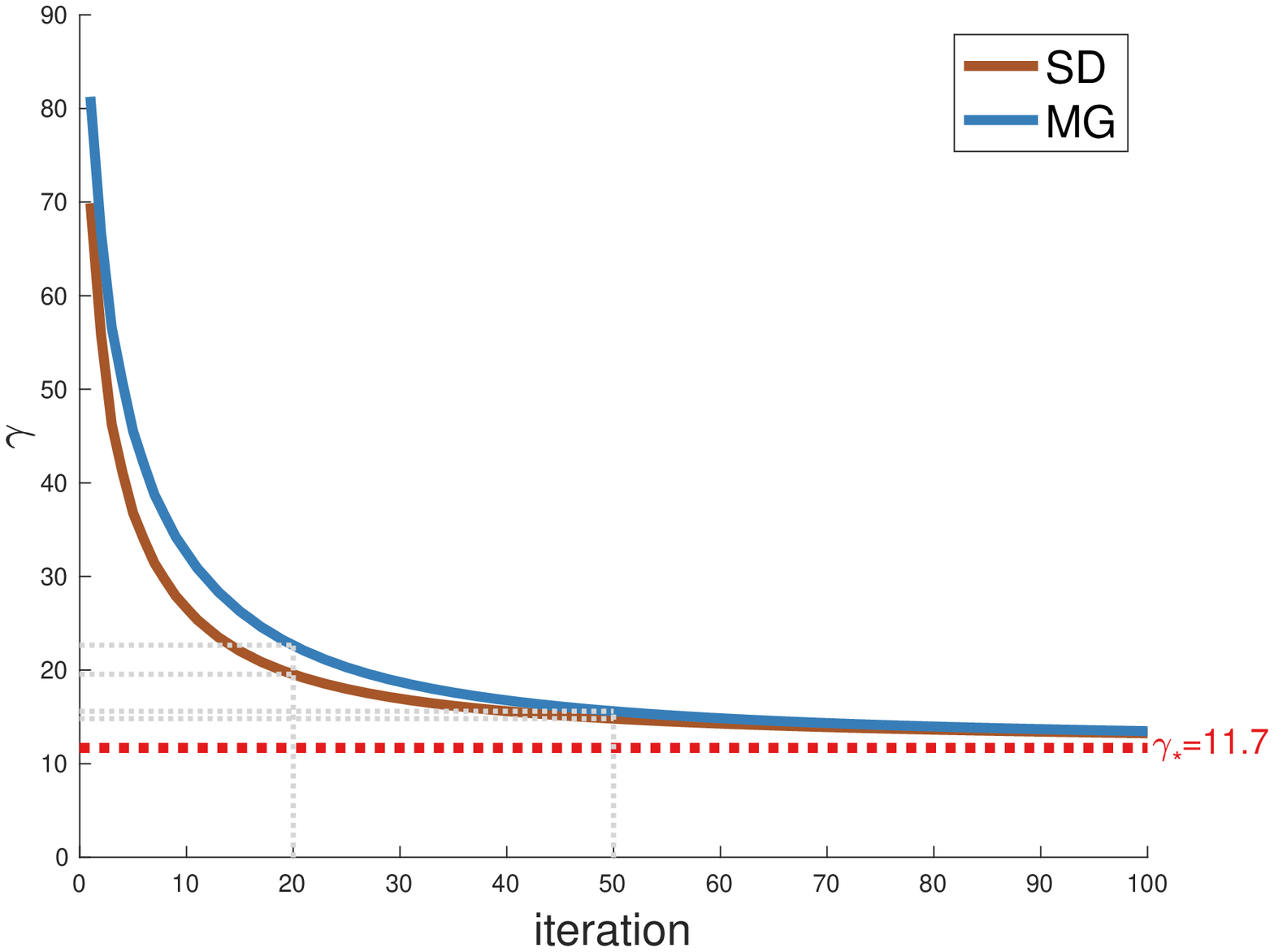}
\end{subfigure}
\caption{Parameter estimation with different matrix~$H$ generated randomly by MATLAB: $\gamma_*=0.8$ (top), $\gamma_*=3.1$ (middle), $\gamma_*=11.7$ (bottom). Parameter~$\gamma$ is computed by two approaches: Theorems~\ref{thm:4} and~\ref{thm:6} (left), Theorems~\ref{thm:5} and~\ref{thm:7} (right).}
\label{fig:3}
\end{figure}
It is clear that $\gamma$ tends to $\gamma_*$ asymptotically as expected.
As can be seen, steepest descent with limits~\eqref{eq:4.1} and~\eqref{eq:5.1} turns out to be a better strategy than minimal gradient in all cases.
The indirect approximations based on~\eqref{eq:5.1} and~\eqref{eq:7.1} yield faster convergence for both steepest descent and minimal gradient.

This test confirms Theorems~\ref{thm:4} to~\ref{thm:7}.
Recall that choosing $\gamma_*$ as parameter leads to an upper bound of $\rho(T)$, for which it is not necessary to obtain an exact estimate.
Experience shows that this choice may sometimes cause overfitting, resulting in slow convergence or even divergence, especially when $\gamma_*$ is small.
One simple measure is to use early stopping in gradient iterations.
In the following, it is assumed that steepest descent is used for parameter estimation in HSS, called preadaptive iterations, and we consider only the direct approach~\eqref{eq:4.1}.

\subsection{HSS with different parameters}
\label{sec:4.2}

In this test we generate some matrices obtained from a classical problem in order to understand the convergence behavior of HSS enhanced by steepest descent iterations.
\begin{example}
\label{ex:cd3d}
Consider system~\eqref{eq:ls} where $A$ arises from the discretization of partial differential equation
\begin{equation}
\label{eq:cd3d}
-\left(\frac{\partial^2u}{\partial x^2}+\frac{\partial^2u}{\partial y^2}+\frac{\partial^2u}{\partial z^2}\right) + \theta\left(\frac{\partial u}{\partial x}+\frac{\partial u}{\partial y}+\frac{\partial u}{\partial z}\right) = q
\end{equation}
on the unit cube $\Omega = [0,\,1]^3$ with $\theta$ a positive constant.
Assume that $u$ satisfies homogeneous Dirichlet boundary conditions.
The finite difference discretization on a uniform $m\times m\times m$ grid with mesh size $h = 1/(m+1)$ is applied to the above model yielding a linear system with $N=m^3$.
\end{example}

In the following we use the centered difference scheme for discretization.
The right-hand side~$b$ is generated with random complex values ranging in~$[-10,\,10]+\iota[-10,\,10]$.
As thresholds for inner iterations, $\varepsilon_1=10^{-4}$ and $\varepsilon_2=10^{-4}$ are chosen.
CG is exploited for solving the Hermitian inner system, while CGNE is used for the skew-Hermitian part.
Figure~\ref{fig:4} shows the convergence behavior of HSS upon different values of the parameter.
\begin{figure}[t]
\centerline{\includegraphics[width=.7\linewidth]{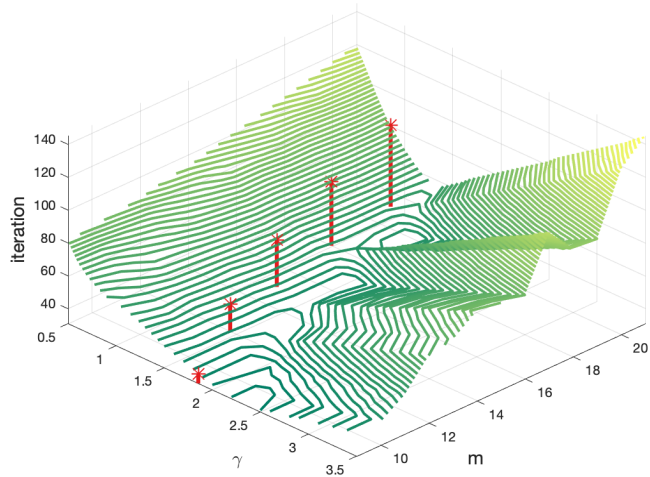}}
\caption{Solving problem~\eqref{eq:cd3d} by HSS with~$\gamma\in[0.5,\,3.5]$ and~$m\in[9,\,21]$. The optimal parameters $\gamma_*$ are located by red lines.\label{fig:4}}
\end{figure}
Here, we set~$\gamma\in[0.5,\,3.5]$ and~$m\in[9,\,21]$.
The optimal parameters~$\gamma_*$ with~$m=9,\,12,\,15,\,18,\,21$ are located by red lines.
Notice that a path that zigzags through the bottom of the valley corresponds to the best parameters.
As already noted that the parameter estimates need not be accurate, and thus the red lines are good enough in practice.

Then approximating $\gamma_*$ by inexact steepest descent iterations yields the preadaptive HSS method (PAHSS).
Let $\eta$ denote the number of preadaptive iterations.
The convergence behaviors and total computing times are illustrated in Figure~\ref{fig:5}.
\begin{figure}
\centering
\begin{subfigure}{.5\textwidth}
  \centering
  \includegraphics[width=.8\linewidth]{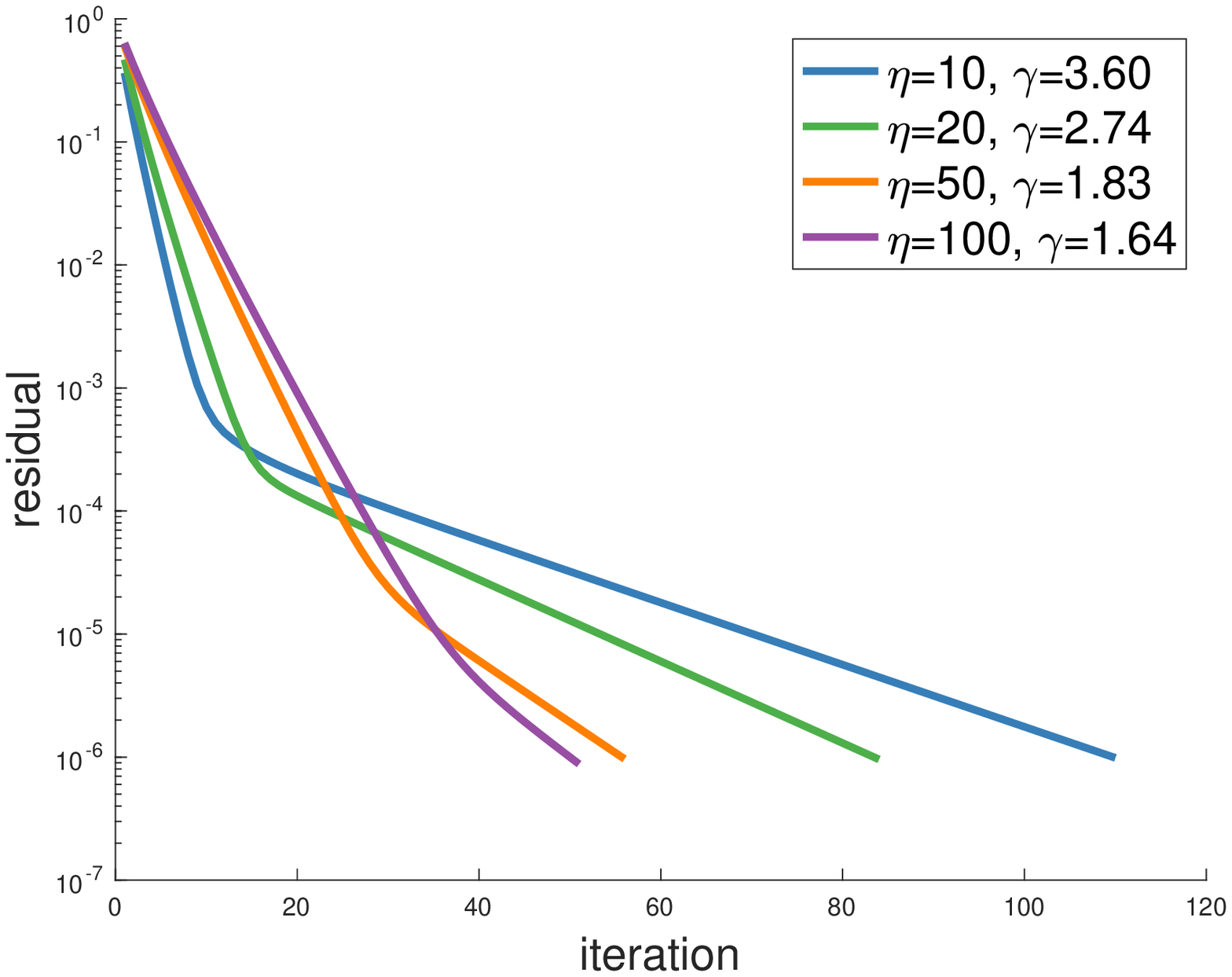}
\end{subfigure}\begin{subfigure}{.5\textwidth}
  \centering
  \includegraphics[width=.8\linewidth]{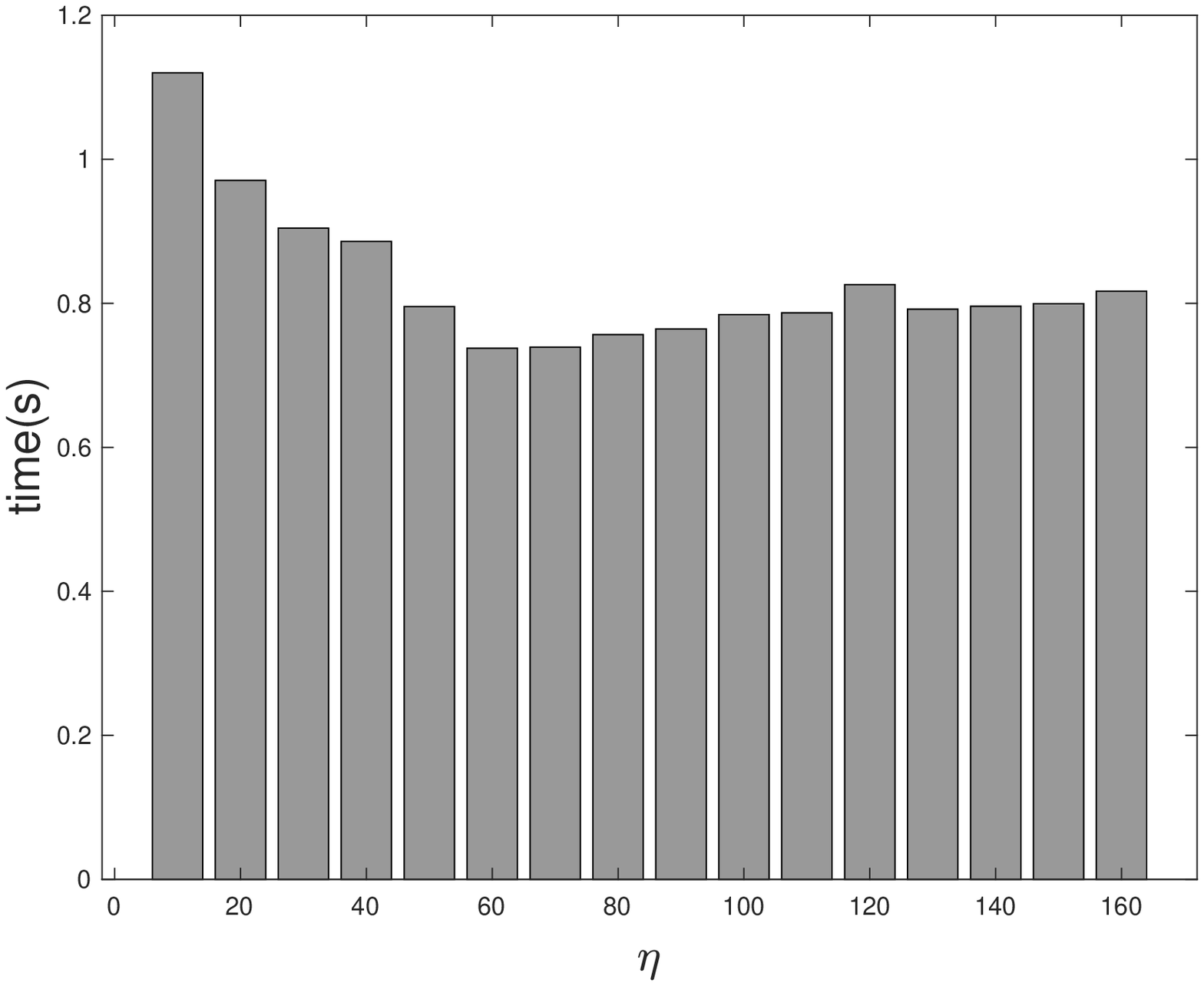}
\end{subfigure}
\begin{subfigure}{.5\textwidth}
  \centering
  \includegraphics[width=.8\linewidth]{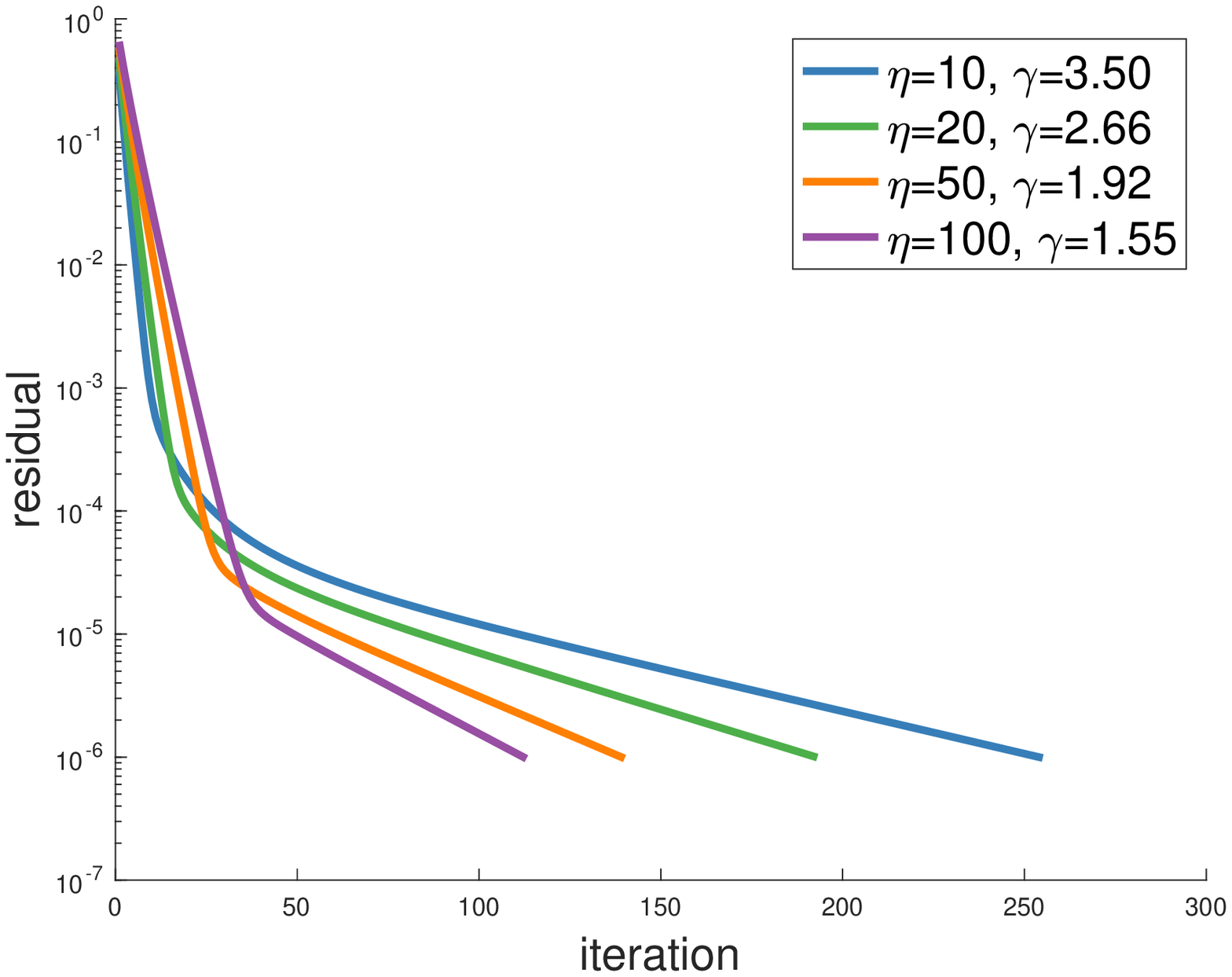}
\end{subfigure}\begin{subfigure}{.5\textwidth}
  \centering
  \includegraphics[width=.8\linewidth]{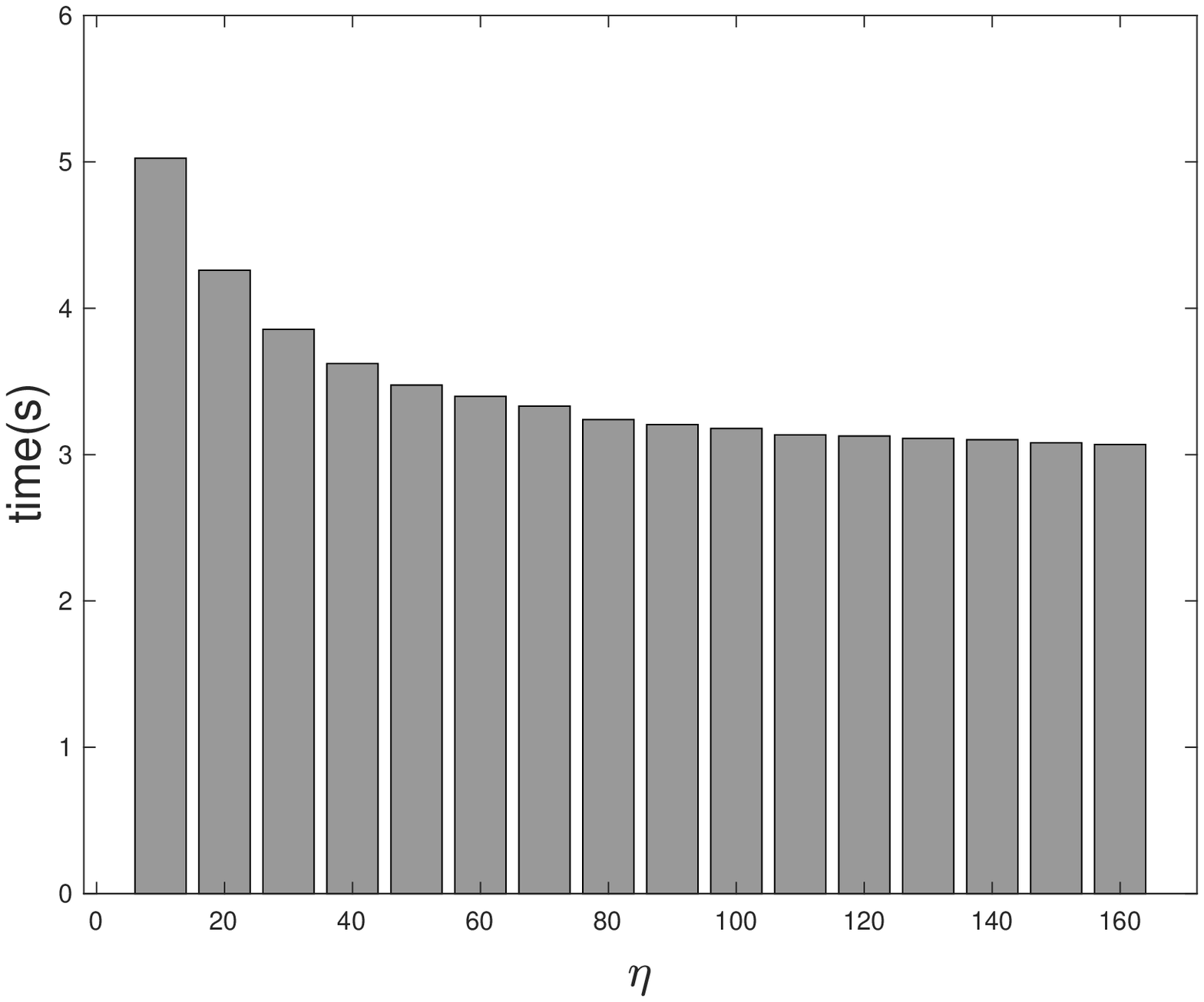}
\end{subfigure}
\begin{subfigure}{.5\textwidth}
  \centering
  \includegraphics[width=.8\linewidth]{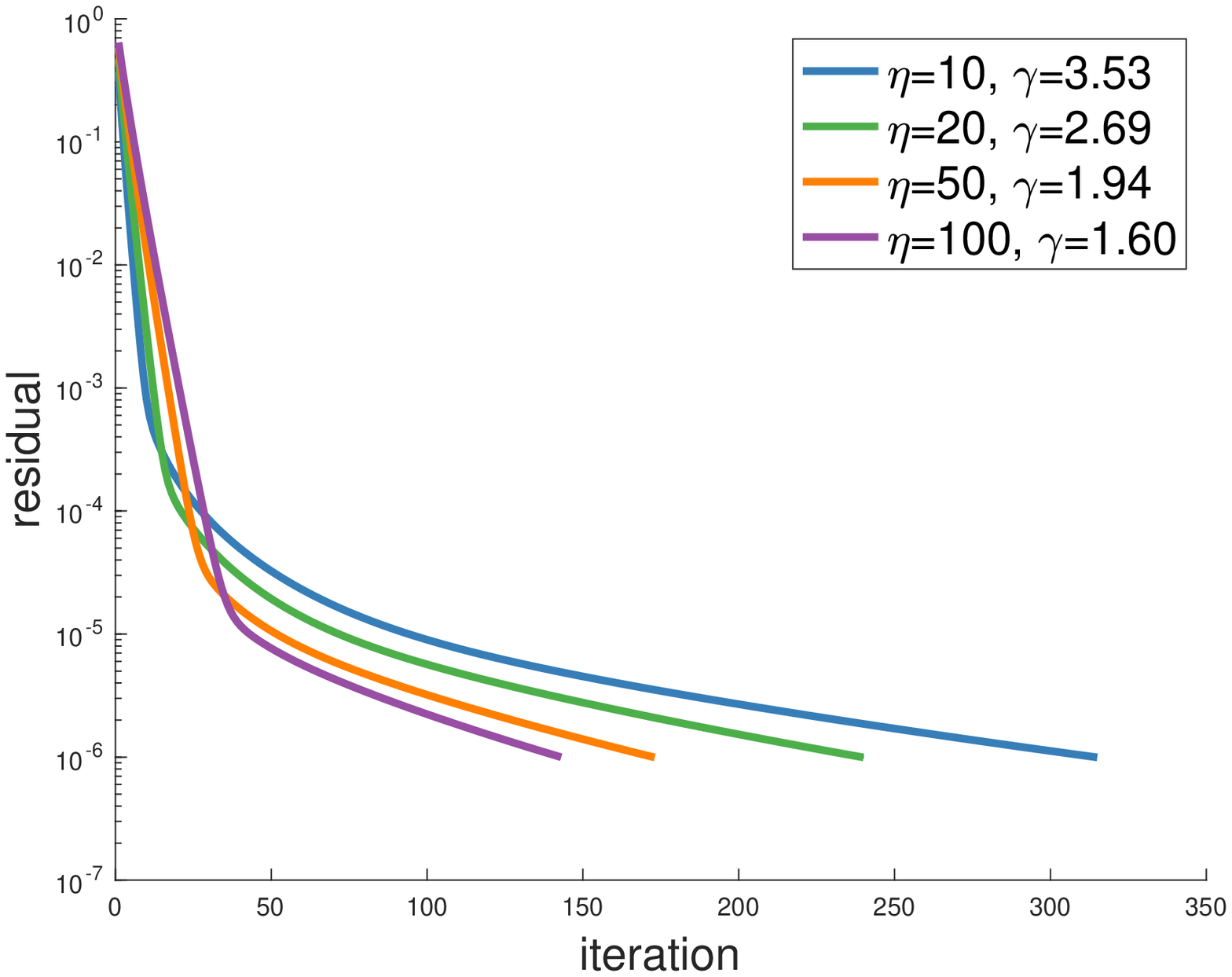}
\end{subfigure}\begin{subfigure}{.5\textwidth}
  \centering
  \includegraphics[width=.8\linewidth]{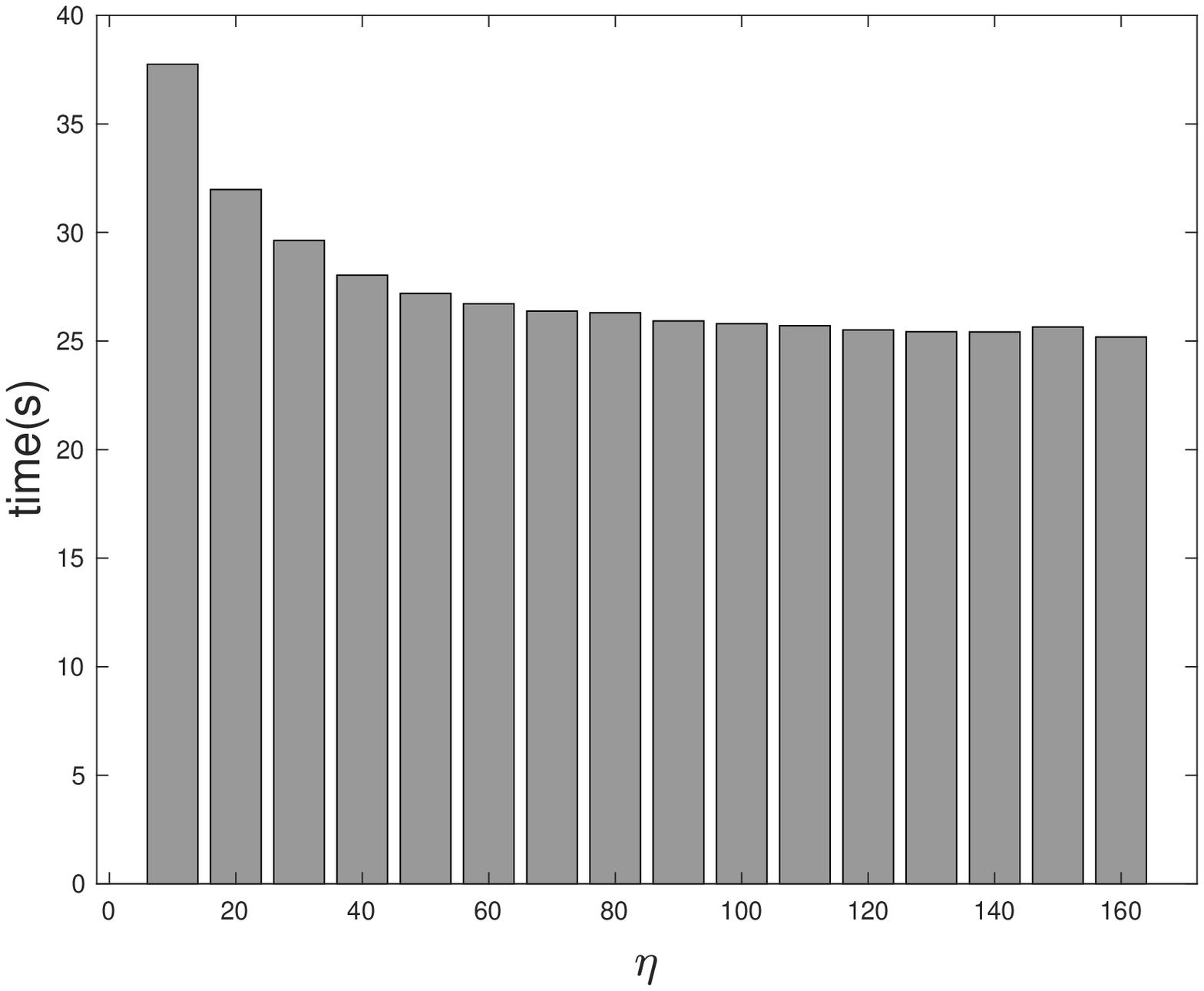}
\end{subfigure}
\begin{subfigure}{.5\textwidth}
  \centering
  \includegraphics[width=.8\linewidth]{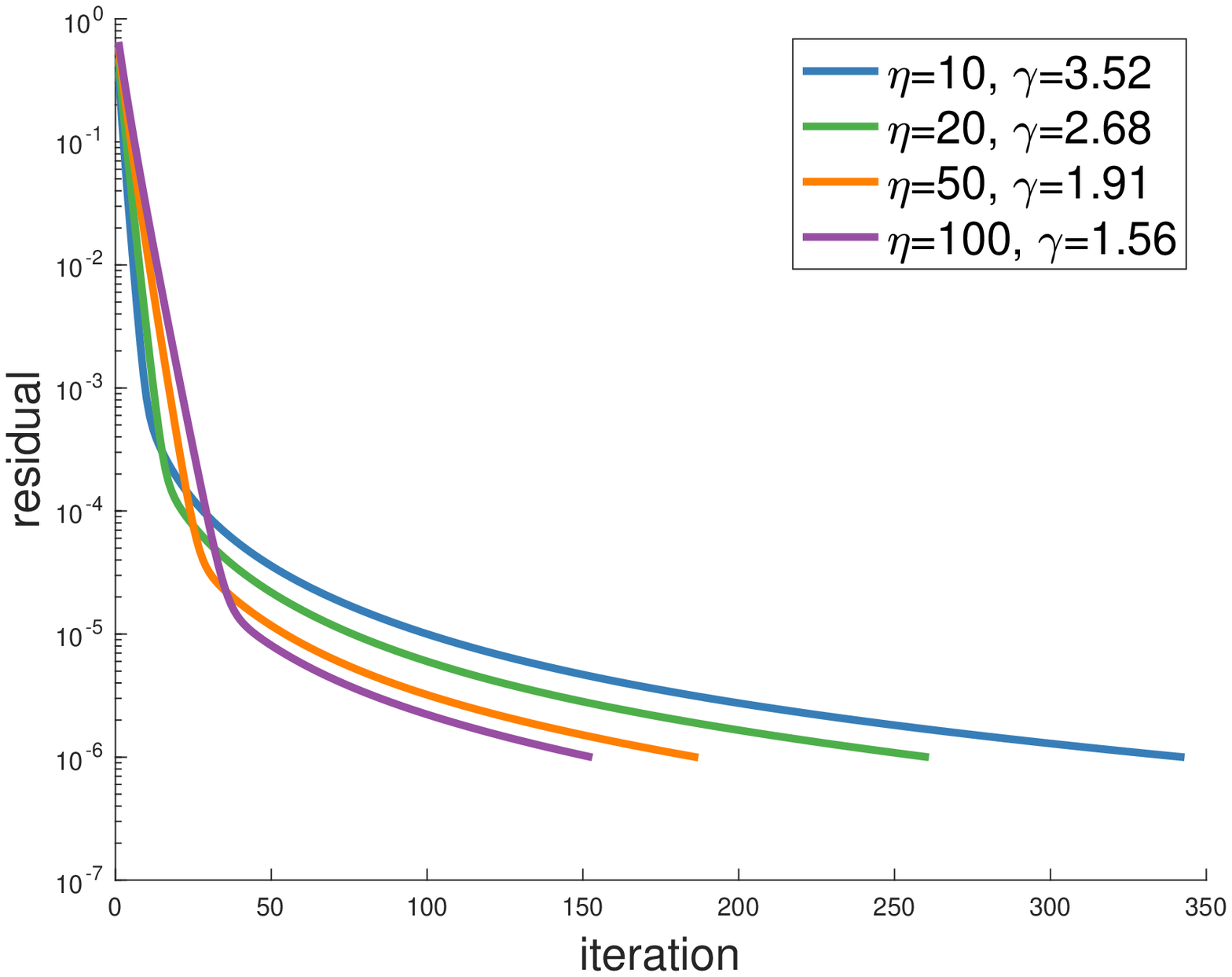}
\end{subfigure}\begin{subfigure}{.5\textwidth}
  \centering
  \includegraphics[width=.8\linewidth]{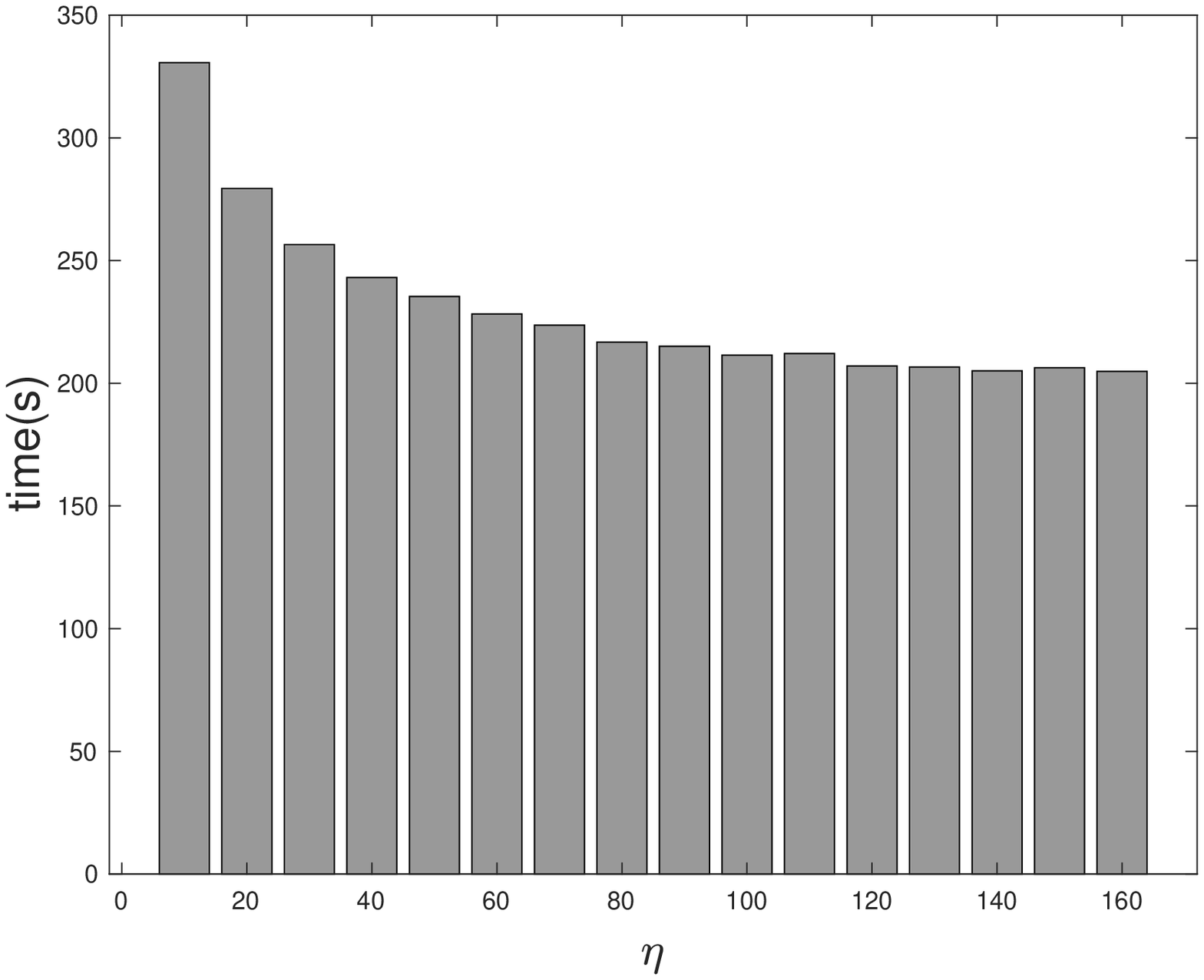}
\end{subfigure}
\caption{Parameter estimation for problem~\eqref{eq:cd3d} with different mesh densities: $m=16$ (first), $m=32$ (second), $m=64$ (third), $m=128$ (fourth). Left: convergence curves for different~$\eta$. Right: average wall-clock times for different~$\eta$ including that of steepest descent iterations.}
\label{fig:5}
\end{figure}
The left four plots show the residual curves with several typical choices of~$\eta$ when $m=16,\,32,\,64,\,128$, namely, $N=4096,\,32768,\,262144,\,2097152$.
Two observations can be made for all dimensions: the first is that larger $\eta$ yields faster convergence of HSS; the second is that $\eta=100$ does not lead to significant gains in efficiency compared with~$\eta=50$.
The right four plots show total wall-clock times of PAHSS iterations, measured in seconds, upon $\eta$ ranging from~$10$ to~$160$.
It can be seen that substantial gains are made in the beginning, following a long period of stagnation.
Experience shows that a small number of steepest descent iterations is sufficient and it is therefore appropriate to use early stopping.

\subsection{CG and BB as low-precision inner solvers}
\label{sec:4.3}

In order to verify that BB can be an efficient alternative to CG as low-precision inner solver for HSS, some tests proceed along the same lines as above but consider both CG and BB as inner solvers for the Hermitian part.
Numbers of total iterations and wall-clock times, which are measured in seconds, are shown in Table~\ref{tab:1}.
\begin{table*}[t]
\caption{Results of different methods for problem~\eqref{eq:cd3d} with $\varepsilon_1=10^{-1}$, $\varepsilon_2=10^{-4}$ and $\gamma=1$.\label{tab:1}}
\centering
\begin{tabular*}{400pt}{@{\extracolsep\fill}l | lll | lll | lll@{\extracolsep\fill}}
\toprule
& \multicolumn{3}{@{}c|@{}}{HSS-CG} & \multicolumn{3}{@{}c|@{}}{HSS-BB} & \multicolumn{3}{@{}c@{}}{ORTHODIR} \\
\midrule
$m$ & $40$ & $60$ & $80$ & $40$ & $60$ & $80$ & $40$ & $60$ & $80$ \\
\midrule
\# iters & $446$ & $475$ & $535$ & $609$ & $686$ & $769$ & $80$ & $85$ & $84$ \\
time (s) & $5.782$ & $6.379$ & $6.832$ & $6.054$ & $7.048$ & $7.659$ & $7.760$ & $8.385$ & $8.350$ \\
\bottomrule
\end{tabular*}
\end{table*}
Since the optimal parameters $\gamma_*$ for~\eqref{eq:cd3d} with $m=40,\,60,\,80$ are less that $1$, which may lead to stability problem, we choose $\gamma=1$ for all tests.
We conduct $10$ repeated experiments and print only the average computation times.
We also add here the results of ORTHODIR~\cite{Young1980} for solving~\eqref{eq:ls} for the purpose of comparison.
The comparison of costs is shown in Table~\ref{tab:2}.
\begin{table*}[t]
\caption{Summary of operations for iteration $i$ and storage requirements. In the HSS row, the term ``solver'' represents an inner solver like CG, BB or CGNE, while the storage requires Hermitian and skew-Hermitian parts of $A$ and the residual vector.\label{tab:2}}
\centering
\begin{tabular*}{400pt}{@{\extracolsep\fill}lllll@{\extracolsep\fill}}
\toprule
method & dot products & vector updates & matrix-vector & storage \\
\midrule
CG & $2$ & $3$ & $1$ & $4N$ \\
BB & $2$ & $2$ & $1$ & $3N$ \\
CGNE & $2$ & $3$ & $2$ & $3N$ \\
HSS & solver & solver$+2$ & solver$+1$ & $2$ matrices $+N$ \\
ORTHODIR & $i+2$ & $2i+2$ & $1$ & $2i+5N$ \\
\bottomrule
\end{tabular*}
\end{table*}
As expected, BB is less efficient than CG in terms of computation times but BB shows a clear advantage for storage requirements and resistance to perturbation~\cite{Fletcher2005,vandenDoel2012}.
In addition, BB and CG used within HSS make the HSS method better than ORTHODIR.
The major drawback to ORTHODIR is that the computational work and storage requirement per iteration rise linearly with the iteration number.
This drawback can be reduced with a restarted version of the ORTHODIR, but this increases the total number of iterations and in the end does not reduce the computation time.
The choice between HSS and Krylov subspace methods depends on how expensive the sparse matrix-vector multiplications are in comparison to the vector updates and how much storage is available for the routine.

\section{Conclusion}
\label{sec:5}

Gradient iterations provide a versatile tool in linear algebra.
Apart from parameter estimates related to the spectral properties, steepest descent variants have also been tried recently with success as iterative methods~\cite{DeAsmundis2013,DeAsmundis2014,Gonzaga2016}.
This paper extends the spectral properties of gradient iterations and gives an application in the Hermitian and skew-Hermitian splitting method.
Note that this approach can be extended to other splitting methods (see Section~\ref{sec:1} and the references therein) without difficulty where a parameter $\gamma$ is needed to be computed.
Our experiments confirm that the gradient-enhanced HSS method can be an attractive alternative to the original one.

\section*{Acknowledgments}

This work was supported by the French national programme LEFE/INSU and the project ADOM (M\'ethodes
de d\'ecomposition de domaine asynchrones) of the French National Research Agency (ANR).

\bibliography{ref}
\bibliographystyle{abbrv}

\end{document}